\documentclass[preprint,11pt]{elsarticle}
\usepackage{amsmath,amssymb,amsthm}
\usepackage{enumitem}
\usepackage{url}
\usepackage[colorlinks=true,linkcolor=blue,citecolor=blue,urlcolor=blue]{hyperref}

\newtheorem{definition}{Definition}[section]
\newtheorem{lemma}[definition]{Lemma}
\newtheorem{proposition}[definition]{Proposition}
\newtheorem{theorem}[definition]{Theorem}
\newtheorem{corollary}[definition]{Corollary}

\newtheorem{remark}[definition]{Remark}

\newcommand{\op}{\oplus}
\newcommand{\od}{\odot}
\newcommand{\omin}{\ominus}

\newcommand{\cl}{\operatorname{cl}}

\newcommand{\bet}{\beta}
\newcommand{\down}{\mathord{\downarrow}}
\begin{document}

\begin{frontmatter}

\title{Pseudocompact Topological \(MV\)-Algebras}

\author[addr1]{Li-Hong Xie}
\author[addr2]{Jiang Yang\corref{cor1}}
\ead{yunli198282@126.com (L.H. Xie), yangjiangdy@126.com}
\cortext[cor1]{Corresponding author.}

\address[addr1]{School of Mathematics and Computational Science, Wuyi University, Jiangmen, Guangdong, 529000, P.R. China}
\address[addr2]{School of Mathematical Sciences, Guangxi Minzu University, Nanning, 530006, P.R. China}

\begin{abstract}
Recently, topological MV-algebras have been investigated by several mathematicians. In this paper, we find that every topological \(MV\)-algebra is a Mal'tsev space introduced by Mal'tsev in 1954.  Hence, applying the theorem of Reznichenko and Uspenskij on pseudocompact Mal'tsev spaces, we show that the product of arbitrary family of pseudocompact topological \(MV\)-algebras are pseudocompact.  We also prove that every $\sigma$-compact topological \(MV\)-algebra is ccc. Secondly, we obtain that the Stone-\v{C}ech compactification of a pseudocompact topological \(MV\)-algebra carries a natural compact topological \(MV\)-algebra structure extending the original one. Finally, we prove that: let \(I\) be a closed ideal in a pseudocompact topological \(MV\)-algebra \(A\) and \(\iota_1:A\hookrightarrow\bet A\) is the naturally injective; then \(\cl_{\beta A}\iota_1(I)\) is a closed ideal of \(\beta A\) and \( \beta A/\cl_{\beta A}\iota_1(I)\cong \beta(A/I)\).
\end{abstract}

\begin{keyword}
topological \(MV\)-algebra; pseudocompact space; Suslin property; $\sigma$-compact space; Mal'tsev space; Stone-\v{C}ech compactification
\MSC[2020] 06D35; 54D30; 54B10; 54H13; 22A30
\end{keyword}

\end{frontmatter}

\section{Introduction}

Throughout the paper all spaces are assumed to be Tychonoff; unless otherwise explicitly stated.

 To investigate many-valued logic by algebraic means, MV-algebras was introduced by Chang \cite{Chang1958} in order to show {\L}ukasiewicz logic to be standard complete,
which are among the most important structures associated with logical systems, where``MV'' is short for ``many-value''. Similar to topological groups, Hoo \cite{Hoo1997} introduced the notion of a topological MV-algebra, which means an MV-algebra $(A,\oplus,\ast,0)$ with a topology such that the operations
$\oplus$ and $\ast$ are continuous functions. Some fundamental properties are investigated by Hoo. Recently, topological MV-algebras have been investigated by several mathematicians.

Pseudocompactness is one of the classical compactness-like properties in general topology. A Tychonoff space is \emph{pseudocompact} if every continuous real-valued function on it is bounded.  Unlike compactness, pseudocompactness is not stable under arbitrary products in general topology.  A fundamental theorem of Comfort and Ross states that:

\begin{theorem}\cite[Theorem 1.4]{ComfortRoss1966}.
 The product of any family of pseudocompact topological groups is pseudocompact.
\end{theorem}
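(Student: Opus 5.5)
We prove the Comfort--Ross theorem by reducing it to the fact that a product of pseudocompact spaces is pseudocompact once the projections behave well. Let $\{G_i\}_{i\in I}$ be pseudocompact topological groups and $G=\prod_{i\in I}G_i$.

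\textbf{Step 1: Glicksberg--Comfort--Ross characterization.} A topological group $H$ is pseudocompact if and only if it is precompact (totally bounded) and $G_\delta$-dense in its Weil (Ra\u{\i}kov) completion $\varrho H$, which is then compact. For precompactness, suppose $H$ is not totally bounded. Then there are a neighbourhood $U$ of $e$ and a sequence $(x_n)$ with $x_nV\cap x_mV=\emptyset$ for $n\neq m$, where $V$ is a symmetric neighbourhood of $e$ with $V^2\subseteq U$. Shrinking $V$ further makes the family $\{x_nV\}$ discrete. Placing continuous bumps of height $n$ on $x_nV$ then gives an unbounded continuous function, which is impossible.

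For $G_\delta$-density, suppose a nonempty $G_\delta$ set of $\varrho H$ misses $H$. Using that compact groups are Dieudonn\'e complete and that $G_\delta$ sets contain closed $G_\delta$ sets of the form $g\ker\varphi$ for continuous homomorphisms $\varphi$ into second-countable groups, one produces a continuous function on $\varrho H$ that is positive on $H$ and vanishes at some point of $\varrho H$. Its reciprocal is then unbounded on $H$.

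Conversely, a $G_\delta$-dense subspace $Y$ of a compact space $K$ is pseudocompact. Indeed, if $f\colon Y\to\mathbb{R}$ were unbounded, pick $y_n$ with $|f(y_n)|>n$ and an accumulation point $p\in K$ of $(y_n)$. Working inside $\beta Y$, with $K$ a quotient of $\beta Y$, one obtains a $G_\delta$ set around a point where the extension of $f$ is infinite. This set is disjoint from $Y$, which contradicts $G_\delta$-density. In practice I would cite the standard lemma that $G_\delta$-dense subspaces of compacta are pseudocompact.

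\textbf{Step 2: Apply to the product.} Each $G_i$ is $G_\delta$-dense in the compact group $\varrho G_i$. Set $K=\prod_i \varrho G_i$, which is compact by Tychonoff's theorem. A nonempty $G_\delta$ subset of $K$ contains a set of the form $\bigcap_n W_n$, where each $W_n$ is basic open. Hence it contains a product $\prod_{i\in J}P_i\times\prod_{i\notin J}\varrho G_i$ with $J$ countable and each $P_i$ a nonempty $G_\delta$ set in $\varrho G_i$. Choose $x_i\in P_i\cap G_i$ for $i\in J$ and $x_i\in G_i$ arbitrary otherwise. Then $x\in G$ lies in the given $G_\delta$ set, so $G$ is $G_\delta$-dense in $K$. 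By Step 1, $G$ is pseudocompact.

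\textbf{Main obstacle.} The delicate point is the forward direction of Step 1: showing that pseudocompactness forces $G_\delta$-density in the completion. Here I would use that every $G_\delta$ point set of a compact group contains a coset of a closed normal subgroup $N$ with $K/N$ metrizable. Precompactness of $H$ lets us map it densely into the metrizable compact group $K/N$. A point of $K/N$ missed by the image of $H$ yields, via the metric, an unbounded continuous function on $H$.
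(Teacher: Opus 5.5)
\textbf{The converse half of Step 1 rests on a false lemma.} Your forward half of Step 1 and your Step 2 are fine. The problem is the converse half of Step 1, and it is precisely what Step 2 relies on, because Step 2 only shows that $G$ is $G_\delta$-dense in $K=\prod_i\varrho G_i$.

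The lemma you plan to cite says that $G_\delta$-dense subspaces of compacta are pseudocompact. This is false. Take $Y=\omega_1\oplus\omega$ and its one-point compactification $K=Y\cup\{\infty\}$. If $\{\infty\}$ were a $G_\delta$, then $Y$ would be $\sigma$-compact. That fails, because the closed subspace $\omega_1$ would then be a countable union of compact, hence bounded, sets. So every nonempty $G_\delta$ of $K$ meets $Y$. Yet $Y$ carries the unbounded continuous function that equals $n$ at $n\in\omega$ and $0$ on $\omega_1$.

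Your $\beta Y$ sketch breaks at exactly this point. The zero-set of $\beta Y$ where the extension of $f$ is infinite (here $\beta\omega\setminus\omega$) is mapped by $\beta Y\to K$ onto a closed set that need not be a $G_\delta$. In the example it collapses to the non-$G_\delta$ point $\infty$. So the argument needs the group structure of $K$, not just its compactness.

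\textbf{How to repair it.} Suppose $\{O_n\}$ is an infinite discrete family of nonempty open subsets of a $G_\delta$-dense subgroup $G$ of a compact group $K$.
\begin{enumerate}
\item Pick $x_n\in O_n$ and an open neighbourhood $W_n$ of $e$ with $x_nW_n^2\cap G\subseteq O_n$. Choose a closed normal subgroup $N_n\subseteq W_n$ with $K/N_n$ metrizable; this is the same compact-group fact you already use.
\item Put $U_n=x_nW_nN_n$. It is open, satisfies $U_nN_n=U_n$, and $U_n\cap G\subseteq O_n$.
\item Let $N=\bigcap_nN_n$ and $\pi\colon K\to K/N$, a compact metrizable group. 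Some subsequence $\pi(x_{n_k})$ converges to a point $q$. The fibre $\pi^{-1}(q)$ is a nonempty $G_\delta$, so $G_\delta$-density gives $z\in G\cap\pi^{-1}(q)$.
\item Fix any open neighbourhood $V$ of $e$. Since $\pi(zV)$ is open, $x_{n_k}\in zVN$ for large $k$. Because $U_{n_k}$ is $N$-saturated, $zV\cap U_{n_k}\neq\emptyset$.
\item This nonempty open set meets the dense set $G$ inside $O_{n_k}$. So $zV\cap G$ meets infinitely many $O_n$, contradicting local finiteness at $z$.
\end{enumerate}
In effect, $N$-saturation replaces the missing product structure. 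With this replacement, or simply by citing the Comfort--Ross characterization for precompact groups, your proof becomes the classical Comfort--Ross argument.

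\textbf{Comparison with the paper.} The paper does not prove this statement. It cites Comfort--Ross and remarks that the result is also a special case of the Reznichenko--Uspenskij theorem, since $m(x,y,z)=xy^{-1}z$ is a Mal'tsev operation on every topological group.
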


This result is one of the best-known examples of the fact that topological groups behave much better than arbitrary topological spaces with respect to products. In many treatments of topological groups this phenomenon is naturally tied to canonical uniformities and completions, especially the two-sided or Ra\v{\i}kov completion; see, for instance, Arhangel'skii and Tkachenko \cite{ArhangelskiiTkachenko2008}.

A space $X$ is ccc. or has the {\it Suslin property}, if every disjoint family of nonempty open sets in $X$ is at most countable. A space is {\it $\sigma$-compact} if it is the union of countably
many compact sets.  Another classical theorem in topological groups obtained by Tkachenko:

\begin{theorem}\cite{Tkachenko1983}
Every $\sigma$-compact topological group is ccc.
\end{theorem}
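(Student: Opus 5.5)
The statement is Tkachenko's theorem: every $\sigma$-compact topological group $G$ is ccc. I would derive it from the Mal'tsev-space results the paper invokes, rather than from Tkachenko's combinatorial argument.

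The first step is to note that $G$ is a Mal'tsev space. The map $\mu(x,y,z)=xy^{-1}z$ is continuous, and it satisfies $\mu(x,y,y)=x$ and $\mu(x,x,z)=z$.

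The second step invokes Uspenskij's theorem that every $\sigma$-compact Mal'tsev space is ccc. I would also sketch why this holds.

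Write $G=\bigcup_n K_n$ with each $K_n$ compact. Suppose $\{U_\alpha:\alpha<\omega_1\}$ is an uncountable disjoint family of nonempty open sets. I would use the Mal'tsev operation to produce, for uncountably many $\alpha$, points whose relation to a fixed compact $K_n$ is controlled. Concretely, choose $x_\alpha\in U_\alpha$, pass to an uncountable subfamily with all $x_\alpha$ in a single $K_n$, and take a complete accumulation point $p\in K_n$.

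For the group case, a more direct route is available. The sets $W_\alpha=U_\alpha x_\alpha^{-1}$ are neighbourhoods of $e$. Continuity of $(x,y)\mapsto xy^{-1}$ on $K_n\times K_n$ together with compactness gives the following: for each $\alpha$ there is a neighbourhood $V_\alpha$ of $e$ and a finite set $F_\alpha\subseteq K_n$ such that the translates cover $K_n$ and are compatible with $W_\alpha$.

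The main obstacle is the step that turns this into a genuine contradiction. One needs a counting or $\Delta$-system argument showing that $x_\alpha$ and $x_\beta$ must be close in a sense forcing $U_\alpha\cap U_\beta\neq\emptyset$. Tkachenko's original proof does this with a delicate combinatorial lemma on free sequences and $\omega$-narrowness.

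I would sidestep that obstacle by taking the Reznichenko--Uspenskij framework as given, since the paper clearly intends to apply it. The chain is then:
\begin{itemize}[leftmargin=*]
\item a topological group is a Mal'tsev space;
\item a $\sigma$-compact Mal'tsev space is ccc (Uspenskij);
\item hence $G$ is ccc.
\end{itemize}
The only thing left to verify is the continuity and the two identities for $\mu$, which is routine.
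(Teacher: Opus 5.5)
Your proposal is correct and follows essentially the same route as the paper, which cites Tkachenko's theorem and then treats it as a special case of Uspenskij's theorem on $\sigma$-compact Mal'tsev spaces, using that $m(x,y,z)=xy^{-1}z$ is a continuous Mal'tsev operation on any topological group. Your unfinished sketch of a direct combinatorial argument contributes nothing and can be dropped, since the three-step chain you end with is the whole proof.
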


The first purpose of the present paper is to verify the corresponding phenomenon for topological \(MV\)-algebras.  Since \(MV\)-algebras are neither groups nor rings, one should not expect the proof for topological groups to transfer directly.  Indeed, a topological \(MV\)-algebra has no invertible translations.  For example, in the standard \(MV\)-algebra \([0,1]\) with
\[
        x\op y=\min\{1,x+y\},
\]
the map \(y\mapsto x\op y\) is usually neither one-to-one nor onto when \(x>0\).  Thus the topological group method based on translations and Ra\v{\i}kov completion is not the right explanation.

The second purpose of the present paper is to study the following questions raised by the Stone-\v Cech compactification of pseudocompact topological \(MV\)-algebras. How do ideals behave under \(A\mapsto\beta A\)?

The paper is organized as follows. Section \ref{sec:preliminaries} recalls the required notions. Section \ref{sec:malcev} proves that every topological \(MV\)-algebra is a Mal'tsev space. Applying this result, we show that the main product theorem in pseudocompact topological $MV$-algebra. In Section \ref{sec:beta}, we also show that the pseudocompact product theorem has a useful compactification consequence. If \(A\) is a Tychonoff pseudocompact topological \(MV\)-algebra, then the Stone-\v{C}ech compactification \(\beta A\) admits a natural compact topological \(MV\)-algebra structure extending the operations of \(A\). The proof combines the product theorem, Glicksberg's theorem on Stone-\v{C}ech compactifications of products \cite{Glicksberg1959}, and the density of \(A\) in \(\beta A\).  This is the \(MV\)-algebraic analogue of the familiar fact that pseudocompact topological groups have compact group compactifications.

\section{Preliminaries}\label{sec:preliminaries}

We recall the notation used throughout the paper.  An \(MV\)-algebra is an algebra
\[
        (A,\op,{}^*,0)
\]
of type \((2,1,0)\) satisfying the usual Chang axioms.  We put
\[
        1=0^*,\qquad
        x\od y=(x^*\op y^*)^*,\qquad
        x\omin y=x\od y^*.
\]
The natural order on \(A\) is defined by
\[
        x\leq y\quad\Longleftrightarrow\quad x^*\op y=1.
\]
With respect to this order, \(A\) is a bounded distributive lattice.  We use
\[
        x\vee y=y\op(x\omin y),
        \qquad
        x\wedge y=x\od(x^*\op y).
\]

For background on \(MV\)-algebras we refer to \cite{Cignoli2000,Mundici1986}.

We shall use the following standard facts.  For all \(x,y,z\in A\),
\begin{align}
        x&=(x\wedge y)\oplus(x\ominus y), \label{eq:decomp}\\
        x\ominus z&\leq (x\ominus y)\oplus(y\ominus z), \label{eq:triangle}\\
        x\ominus(x\ominus y)&=x\wedge y, \label{eq:double-minus}
\end{align}
for all \(x,y,z\in A\).  These are standard facts in the theory of \(MV\)-algebras; see \cite{Chang1958,Cignoli2000,Mundici1986}.
For \(n\geq 1\), write \(nx\) for the \(n\)-fold truncated sum \(x\oplus\cdots\oplus x\).

A subset \(I\subseteq A\) is an \emph{ideal} if \(0\in I\), if \(y\leq x\in I\) implies \(y\in I\), and if \(x,y\in I\) implies \(x\oplus y\in I\). The Chang distance on an \(MV\)-algebra \(A\) \(d:A\times A\rightarrow A\) is defined as:
\[
        d(x,y)=(x\ominus y)\oplus(y\ominus x),
\]
for each \(x,y\in A\).

If \(I\) is an ideal, the associated congruence is
\[
        x\equiv_I y \quad\Longleftrightarrow\quad d(x,y)\in I.
\]
Since ideals are downward closed, this is equivalent to
\[
        x\ominus y\in I
        \quad\text{and}\quad
        y\ominus x\in I.
\]
The quotient algebra is denoted by \(A/I\).

A \emph{topological \(MV\)-algebra} is an \(MV\)-algebra \(A\) equipped with a topology such that
\[
        \op:A\times A\longrightarrow A,
        \qquad
        {}^*:A\longrightarrow A
\]
are continuous. Since \(\od,\omin,\wedge,\vee\) are term operations, they are continuous in every topological \(MV\)-algebra.  Hoo introduced and studied topological \(MV\)-algebras in \cite{Hoo1997}.  Weber investigated locally convex topological \(MV\)-algebras and their relationship with uniform \(MV\)-algebras in \cite{Weber2012}; more recently, Li and Yang developed the uniformization theory of locally convex topological \(MV\)-algebras and gave examples showing that topological \(MV\)-algebras need not be normal \cite{LiYang2026}.

 The main product theorem of pseudocompact topological \(MV\)-algebras  is formulated for the same setting in which the Reznichenko--Uspenskij theorem on pseudocompact Mal'tsev spaces.

\begin{theorem}\cite[Reznichenko--Uspenskij]{ReznichenkoUspenskij1998}\label{thm:RU}
The product of any family of pseudocompact Mal'tsev spaces is pseudocompact.
\end{theorem}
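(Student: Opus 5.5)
The statement is the theorem of Reznichenko and Uspenskij, which the paper only cites. If I had to prove it, I would route everything through $G_\delta$-density. Recall two standard facts. First, a Tychonoff space $X$ is pseudocompact if and only if $X$ is $G_\delta$-dense in $\beta X$, i.e.\ meets every nonempty $G_\delta$-subset of $\beta X$. Second, if each $X_i$ is $G_\delta$-dense in a compact space $K_i$, then $\prod_i X_i$ is $G_\delta$-dense in the compact space $\prod_i K_i$. This holds because a nonempty $G_\delta$ of the product contains a nonempty set of the form $\prod_{i\in C}G_i\times\prod_{i\notin C}K_i$ with $C$ countable and each $G_i$ a $G_\delta$ in $K_i$. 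A $G_\delta$-dense subspace of a compact space is pseudocompact. Hence the theorem follows once each pseudocompact Mal'tsev space $X$ is realised as a $G_\delta$-dense subspace of some compactum, and $K=\beta X$ already works for a single factor. So the substance is really that the Mal'tsev structure survives well enough in $\beta X$ to control the countable products appearing above.

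Concretely, let $p\colon X^3\to X$ be a continuous Mal'tsev operation, so $p(x,y,y)=x$ and $p(x,x,y)=y$. The plan has two steps.

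\emph{Step 1.} Show that finite powers $X^n$ of a pseudocompact Mal'tsev space are pseudocompact. My route is to prove $X\times X$ pseudocompact and then iterate, which is possible because finite powers of Mal'tsev spaces are Mal'tsev (apply $p$ coordinatewise). For $X\times X$ I would argue by contradiction. Take a discrete family of open boxes $U_n\times V_n$. Use the continuous map $(x,y,z)\mapsto p(x,y,z)$ together with the retraction-like identities to transfer this family to a locally finite infinite family of nonempty open subsets of $X$ itself. The idea is to fix a point $a$ and exploit that $y\mapsto p(x,a,y)$ and $x\mapsto p(x,a,a)=x$ mix the two coordinates. This contradicts pseudocompactness of $X$.

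\emph{Step 2.} Once $X^3$ is pseudocompact, Glicksberg's theorem gives $\beta(X^3)=(\beta X)^3$. Hence $p$ extends to a continuous $\tilde p\colon(\beta X)^3\to\beta X$, and by density $\tilde p$ is still a Mal'tsev operation, so $\beta X$ is a compact Mal'tsev space.

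For countable (and then arbitrary) products I would then use that a product of Mal'tsev spaces is Mal'tsev, and that pseudocompactness of $\prod_i X_i$ is determined by its countable subproducts. It therefore suffices to treat $Y=\prod_{n<\omega}X_n$. I would show that $Y$ is $G_\delta$-dense in $\prod_n\beta X_n$ using the coordinatewise compact Mal'tsev operation. Given a nonempty $G_\delta$-set $G\subseteq\prod_n\beta X_n$, I would pick points of the finite products $X_0\times\dots\times X_k$ in the projections of $G$ (possible by Step 1 and Glicksberg). I would then glue them coordinatewise via the Mal'tsev operation into a single point of $Y\cap G$, using $\tilde p(x,y,y)=x$ to keep earlier coordinates fixed.

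I expect Step 1, pseudocompactness of $X\times X$, to be the main obstacle, since it is exactly where the Mal'tsev identities must defeat the classical counterexamples of pseudocompact spaces with non-pseudocompact square. I would first try to prove, via the Mal'tsev operation, that every pseudocompact Mal'tsev space is $G_\delta$-dense in a compact Mal'tsev space in a way compatible with products. Failing that, I would fall back on Uspenskij's result that compact Mal'tsev spaces are Dugundji spaces, and use the resulting $\omega$-spectral structure of $\beta X$ to reduce to the metrizable case.
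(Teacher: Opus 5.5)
The paper gives no proof of this statement; it only cites Reznichenko--Uspenskij. So your sketch has to stand on its own, and it has two genuine gaps.

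\textbf{The reduction rests on a false lemma.} The claim ``a $G_\delta$-dense subspace of a compact space is pseudocompact'' is false. Together with your other two facts it would make \emph{every} product of pseudocompact spaces pseudocompact. Concretely, take the classical Nov\'ak--Terasaka countably compact spaces $\omega\subseteq X,Y\subseteq\beta\omega$ with $X\cap Y=\omega$.
\begin{itemize}
\item Both are $G_\delta$-dense in $\beta\omega=\beta X=\beta Y$.
\item Hence $X\times Y$ is $G_\delta$-dense in $\beta\omega\times\beta\omega$ by your box argument.
\item Yet $\{(n,n):n\in\omega\}$ is an infinite clopen discrete subset of $X\times Y$, so $X\times Y$ is not pseudocompact.
\end{itemize}
What is true is that $X$ is pseudocompact iff it is $G_\delta$-dense in its \emph{own} $\beta X$. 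It is also true that $G_\delta$-dense subspaces of Dugundji (more generally, openly generated) compacta are pseudocompact.

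Consequently your final ``gluing'' step aims at the wrong target. $\prod_n X_n$ is $G_\delta$-dense in $\prod_n\beta X_n$ for free, with no Mal'tsev operation at all, and this says nothing about pseudocompactness. The correct finish is the one you relegated to a fallback. If each $\beta X_n$ is a compact Mal'tsev space, Uspenskij's theorem makes it Dugundji, so $\prod_n\beta X_n$ is Dugundji. Then its $G_\delta$-dense subspace $\prod_n X_n$ is pseudocompact, by the usual argument with open projections onto metrizable compacta.

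\textbf{Step~1 is not proved, and everything depends on it.} Nothing in the proposal shows that the Mal'tsev operation extends to $\beta X$. In your plan this comes from Glicksberg's theorem applied to $X^3$, i.e.\ from Step~1, and Step~1 is only a hope. You give no construction that turns a discrete family of boxes in $X\times X$ into a locally finite family in $X$. The identities $p(x,a,a)=x$ and $p(a,a,y)=y$ cannot do this by themselves, because continuous open surjections do not preserve local finiteness. For example, under $(x,y)\mapsto x+y$ on $\mathbb R^2$, the discrete boxes $(2n,2n+1)\times(-2n,-2n+1)$ all map onto $(0,2)$. So pseudocompactness of $X$ would have to be used globally, and you do not say how.

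The fallback does not rescue this. Uspenskij's Dugundji theorem applies only once $\beta X$ is known to be Mal'tsev, which is exactly what Step~1 was supposed to supply. Extending the Mal'tsev structure to $\beta X$ for pseudocompact $X$ is where the real difficulty of the Reznichenko--Uspenskij theorem lies, and your proposal leaves it unproved.
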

\begin{definition}\label{def:malcev-space}\cite{Mal'tsev1954}
A topological space \(X\) is called a \emph{Mal'tsev space} if there exists a continuous map
\[
        m:X^3\longrightarrow X
\]
such that
\[
        m(x,y,y)=m(y,y,x)=x
\]
for all \(x,y\in X\). Such a map is called a \emph{Mal'tsev operation} on \(X\).
\end{definition}

Every topological group is a Mal'tsev space: if \(G\) is a topological group, then
\[
        m(x,y,z)=xy^{-1}z
\]
is a continuous Mal'tsev operation.

 \begin{theorem}\cite[Uspenskij]{Uspenskij1985}\label{thm:U}
 Every $\sigma$-compact Mal'tsev space is ccc.
 \end{theorem}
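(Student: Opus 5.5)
The plan is to reduce the statement to Tkachenko's theorem (every $\sigma$-compact topological group is ccc) through a retraction argument. Two facts do the reduction. First, a continuous image of a ccc space is ccc: the preimages of disjoint nonempty open sets are disjoint, nonempty and open. Second, I will show that a $\sigma$-compact Mal'tsev space $X$ is a continuous image of a $\sigma$-compact subset of a $\sigma$-compact topological group. The natural group to use is the free (Markov) topological group $F(X)$. If $X=\bigcup_n K_n$ with each $K_n$ compact, then $F(X)=\bigcup_{n,k}B_k(K_n)$, where $B_k(K_n)$ is the set of words of length $\le k$ in letters from $K_n\cup K_n^{-1}$. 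Each $B_k(K_n)$ is compact, being a continuous image of $(K_n\oplus K_n^{-1}\oplus\{e\})^k$. So $F(X)$ is a $\sigma$-compact topological group and hence ccc.

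Next I would isolate a subspace of $F(X)$ that carries the Mal'tsev structure. Let $W\subseteq F(X)$ be the set of elements representable as alternating words $x_1x_2^{-1}x_3x_4^{-1}\cdots x_{2k+1}$ with $x_i\in X$, of odd length. On such words define $r$ recursively by
\[
r(x_1)=x_1,\qquad r(x_1x_2^{-1}x_3\cdots x_{2k+1})=m\bigl(r(x_1x_2^{-1}\cdots x_{2k-1}),x_{2k},x_{2k+1}\bigr).
\]
Well-definedness on group elements is where the identities $m(x,y,y)=x$ and $m(y,y,x)=x$ come in. A reduction $x_{2k}^{-1}x_{2k+1}$ with $x_{2k}=x_{2k+1}$ gives $m(u,x,x)=u$. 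A reduction $x_{2k-1}x_{2k}^{-1}$ with $x_{2k-1}=x_{2k}$ should be absorbed by the other identity. Since an alternating word reduces only through such adjacent cancellations, I will check invariance under a single cancellation and then induct on length. I expect that the left-nested recursion needs to be replaced by a more symmetric bracketing, or by an induction on the position of the cancellation, for the middle cancellations to work out. I will settle this by a careful case check.

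The main obstacle is then continuity of $r:W\to X$ and $\sigma$-compactness of $W$. For the latter, $W=\bigcup_{n,k}W_{k}(K_n)$, where $W_k(K_n)$ is the image of the compact set $K_n^{2k+1}$ under the continuous multiplication map $\pi_k(x_1,\dots,x_{2k+1})=x_1x_2^{-1}\cdots x_{2k+1}$. On each $K_n^{2k+1}$, the map $r\circ\pi_k$ is an explicit composition of copies of $m$, hence continuous. Because $\pi_k$ restricted to the compact set $K_n^{2k+1}$ is a closed map onto a Hausdorff space, $r$ is continuous on each compact piece $W_k(K_n)$.

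Continuity of $r$ on all of $W$ is not needed, and that sidesteps the delicate topology of $F(X)$. It suffices to argue on compact pieces, as follows. Let $\{U_\alpha\}$ be an uncountable disjoint family of nonempty open sets in $X$. Then the sets $U_\alpha\cap X$ viewed inside $F(X)$ (recall $X$ embeds as a closed subspace of $F(X)$) are not open in $F(X)$, so I would rather enlarge each one. Set
\[
V_\alpha=\{g\in W:\ r(g)\in U_\alpha\}.
\]
These are disjoint and nonempty, and each is relatively open in every compact piece $W_k(K_n)$. I would then show that $W$ is ccc and use this to conclude. Concretely, I would show that $W$, or a suitable open neighbourhood of $X$ in $F(X)$ retracting onto it, inherits ccc from $F(X)$. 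If this becomes messy, the fallback is to quote Sipacheva/Gartside--Reznichenko--Sipacheva: every Mal'tsev space is a retract of a subspace of a topological group generated by it, and for $\sigma$-compact $X$ that group can be taken $\sigma$-compact. The conclusion then follows from Tkachenko's theorem and the fact that ccc passes to continuous images.
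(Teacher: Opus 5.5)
The paper does not prove this statement; it only quotes Uspenskij. Your reduction to Tkachenko's theorem has two genuine gaps.

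First, $r$ is not well defined, and it breaks exactly in the case you expected to be ``absorbed by the other identity''. In the left-nested recursion, the cancellations $x_1x_2^{-1}$ and $x_{2j}^{-1}x_{2j+1}$ cause no trouble. A middle cancellation $x_{2j-1}=x_{2j}=a$ with $j\ge 2$, however, needs
\[
m\bigl(m(u,y,a),a,z\bigr)=m(u,y,z).
\]
This is a para-associativity law that $xy^{-1}z$ satisfies in a group. It does not follow from $m(x,y,y)=x=m(y,y,x)$, because the first argument of the outer $m$ is $m(u,y,a)$, not $a$. The paper's own term on the standard $MV$-algebra $[0,1]$ gives a counterexample. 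Take $p=1$ and $q=0$; then $m(p,q,p)=p$ and $m(q,p,q)=q$. The word $p\,q^{-1}p\,p^{-1}q$ equals $p$ in $F(X)$, yet your recursion sends it to $m(m(p,q,p),p,q)=m(p,p,q)=q$. Rebracketing only moves the failure. The right-nested choice $m(x_1,x_2,m(x_3,x_4,x_5))$ sends the word $p\,q^{-1}q\,p^{-1}q$, which equals $q$, to $m(p,q,m(q,p,q))=m(p,q,q)=p$. The Mal'tsev identities supply no associativity with which to run such a recursion.

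Second, even a continuous retraction $r:W\to X$ would not finish the proof, because ccc does not pass to subspaces. Your reduction as stated (``$X$ is a continuous image of a $\sigma$-compact subset of a $\sigma$-compact group'') holds for every $\sigma$-compact space: use $X\subseteq F(X)$ and the identity map. That includes non-ccc spaces such as the one-point compactification of an uncountable discrete set. What you need is for $X$ to be a continuous image of a ccc space, for instance of an open or dense part of $F(X)$. But $W$ is neither: it misses the nonempty open coset of exponent sum $0$, and in general it is not open. Your sets $V_\alpha$ are also only shown to be relatively open in the compact pieces $W_k(K_n)$. That need not make them open in $W$ unless $X$ is, say, a $k_\omega$-space. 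The natural ccc candidate is the clopen coset of exponent sum $1$ in $F(X)$. It retracts onto $X$ if and only if $X$ is a retract of $F(X)$, equivalently of some topological group. Mal'tsev spaces need not be retracts of topological groups (Gartside--Reznichenko--Sipacheva). Your fallback citation has the same defect, since a retraction onto $X$ from a mere subspace of a group gives nothing for ccc. So the route through free groups does not reach the statement, and the paper uses Uspenskij's theorem as a black box.
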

 Theorems \ref{thm:RU} and \ref{thm:U} showed that the Comfort--Ross product theorem for pseudocompact topological groups and the Suslin property for $\sigma$-compact topological groups are special cases of a more general theorem for Mal'tsev spaces, respectivly.

If \(h : A \rightarrow B\) is a homomorphism between two \(MV\)-algebras \(A\) and \(B\), then we use the symbol \(\text{ker}(h)\) to
denote the kernel of \(h\), where \(\text{ker}(h) := \{x \in A | h (x) = 0_B \}\).

\begin{proposition}\cite[Corollary 3.2.]{YinXieYang2026}\label{Pro:YinXieYang}
 Suppose that \( A, B \) and \( C \) are topological MV-algebras, \( \varphi: A \to B \) and \( \psi: A \to C \) are continuous surjective homomorphisms such that \( \ker(\psi) \subseteq \ker(\varphi) \). If the homomorphism \( \psi \) is open, then there exists a continuous homomorphism \( f: C \to B \) such that \( \varphi = f \circ \psi \).
\end{proposition}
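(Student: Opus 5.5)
The statement is Proposition~\ref{Pro:YinXieYang}: given a continuous factorization problem $\varphi=f\circ\psi$ with $\ker(\psi)\subseteq\ker(\varphi)$ and $\psi$ open. The plan is to build $f$ algebraically first, using the kernel inclusion, and only then prove continuity, using openness of $\psi$.

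\emph{Algebraic step.} For $c\in C$, pick $a\in A$ with $\psi(a)=c$, which is possible since $\psi$ is surjective, and set $f(c)=\varphi(a)$. To see that this is well defined, suppose $\psi(a)=\psi(a')$. Since $\psi$ is a homomorphism,
\[
\psi(d(a,a'))=d(\psi(a),\psi(a'))=0_C,
\]
because $d(u,u)=(u\ominus u)\oplus(u\ominus u)=0$. Hence $d(a,a')\in\ker(\psi)\subseteq\ker(\varphi)$, so $d(\varphi(a),\varphi(a'))=0_B$. In any $MV$-algebra $d(x,y)=0$ forces $x\ominus y=0$ and $y\ominus x=0$, that is, $x\le y$ and $y\le x$, so $x=y$. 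Thus $\varphi(a)=\varphi(a')$ and $f$ is well defined. The homomorphism property follows by transport. If $\psi(a)=c$ and $\psi(a')=c'$, then $\psi(a\oplus a')=c\oplus c'$, so
\[
f(c\oplus c')=\varphi(a\oplus a')=f(c)\oplus f(c').
\]
The same argument gives $f(c^*)=f(c)^*$ and $f(0)=0$. By construction $\varphi=f\circ\psi$. Equivalently, $f$ is the map induced through the isomorphism $A/\ker\psi\cong C$ given by the standard correspondence between ideals and congruences.

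\emph{Continuity step.} Let $U\subseteq B$ be open. Then
\[
f^{-1}(U)=\psi(\varphi^{-1}(U)).
\]
For ``$\supseteq$'': if $\varphi(a)\in U$, then $f(\psi(a))=\varphi(a)\in U$. For ``$\subseteq$'': given $c\in f^{-1}(U)$, choose a preimage $a$ with $\psi(a)=c$; then $\varphi(a)=f(c)\in U$. Now $\varphi^{-1}(U)$ is open because $\varphi$ is continuous, and its image under $\psi$ is open because $\psi$ is open. Hence $f^{-1}(U)$ is open and $f$ is continuous.

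\emph{Main obstacle.} The only point that is not routine is well-definedness. There one must verify that $d(x,y)=0$ implies $x=y$, which rests on the fact that $x\ominus y=0$ is equivalent to $x\le y$. Both surjectivity of $\psi$ (needed to define $f$ on all of $C$) and openness of $\psi$ (needed for continuity) are used essentially. Surjectivity of $\varphi$ is not needed.
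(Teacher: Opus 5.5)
Your proof is correct and complete. Well-definedness follows from $d(a,a')\in\ker\psi\subseteq\ker\varphi$ together with the fact that $d(x,y)=0$ forces $x=y$. The homomorphism property follows by transport, and continuity follows from $f^{-1}(U)=\psi(\varphi^{-1}(U))$ and the openness of $\psi$.

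The paper gives no proof of its own here; it simply imports the result as Corollary~3.2 of Yin--Xie--Yang. Your direct factorization is the standard argument one would expect behind such a corollary: identify $C$ with $A/\ker\psi$, which is topological since $\psi$ is open, and then factor $\varphi$ through that quotient.
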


\section{The Mal'tsev mechanism in topological \(MV\)-algebras}\label{sec:malcev}

We now verify that every topological \(MV\)-algebra is an Mal'tsev space.  This is the key algebraic reason behind the pseudocompact product theorem.

\begin{proposition}\label{prop:mv-malcev}
Let \(A\) be an \(MV\)-algebra.  Define
\[
        m(x,y,z)=((x\omin y)\op z)\wedge((z\omin y)\op x).
\]
Then \(m\) is a Mal'tsev term; that is,
\[
        m(x,y,y)= m(y,y,x)=x
\]
for all \(x,y\in A\).  Consequently, the underlying space of every topological \(MV\)-algebra is a Mal'tsev space.
\end{proposition}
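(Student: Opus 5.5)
Since \(m\) is symmetric in \(x\) and \(z\), it suffices to verify one identity, \(m(x,y,y)=x\); the other, \(m(y,y,x)=x\), then follows by swapping the outer arguments.

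\textbf{The identity \(m(x,y,y)=x\).} Substituting \(z=y\) gives
\[
m(x,y,y)=((x\omin y)\op y)\wedge((y\omin y)\op x).
\]
First, \(y\omin y=y\od y^*=0\). This is the standard identity \(y\od y^*=(y^*\op y)^*=1^*=0\), using \(y^*\op y=1\). Hence the second factor is \(0\op x=x\).

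Next, the first factor is \((x\omin y)\op y\). Using \(x\vee y=y\op(x\omin y)\) from the preliminaries, together with commutativity of \(\op\), this equals \(x\vee y\).

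Therefore
\[
m(x,y,y)=(x\vee y)\wedge x=x
\]
by lattice absorption, since \(A\) is a lattice under the natural order.

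\textbf{Topological consequence.} In a topological \(MV\)-algebra the operations \(\omin\), \(\op\) and \(\wedge\) are continuous, as noted in the preliminaries, because they are term operations. Hence \(m\) is a composition of continuous maps \(A^3\to A\) and is continuous. So \(m\) is a Mal'tsev operation in the sense of Definition \ref{def:malcev-space}, and \(A\) is a Mal'tsev space.

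\textbf{Main point of care.} The only step needing attention is the identification \((x\omin y)\op y=x\vee y\). This is exactly the paper's definition of \(\vee\) up to commutativity of \(\op\). If one instead takes \(\vee\) to be the order-theoretic join, it is the standard Chang axiom \((x^*\op y)^*\op y=(y^*\op x)^*\op y\), i.e. \((x\omin y)\op y=(y\omin x)\op x\). Everything else is routine.
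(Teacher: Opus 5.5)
Your proof is correct and takes essentially the same route as the paper. You substitute, use $y\ominus y=0$, and show $x\le (x\ominus y)\oplus y$ by recognizing $(x\ominus y)\oplus y=x\vee y$ and applying absorption, whereas the paper gets the same inequality from the decomposition $x=(x\wedge y)\oplus(x\ominus y)$ and monotonicity of $\oplus$; your use of the symmetry $m(x,y,z)=m(z,y,x)$ (from commutativity of $\wedge$) simply replaces the paper's separate computation of $m(y,y,x)$.
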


\begin{proof}
Since \(m\) is built from the \(MV\)-operations, it is an \(MV\)-term.  First,
\[
\begin{aligned}
        m(x,y,y)
        &=((x\omin y)\op y)\wedge((y\omin y)\op x)  \\
        &=((x\omin y)\op y)\wedge x.
\end{aligned}
\]
By \eqref{eq:decomp},
\[
        x=(x\wedge y)\op(x\omin y).
\]
Since \(x\wedge y\leq y\) and \(\op\) is order-preserving, we have
\[
        x=(x\wedge y)\op(x\omin y)
        \leq y\op(x\omin y)=(x\omin y)\op y.
\]
Therefore
\[
        ((x\omin y)\op y)\wedge x=x,
\]
and hence
\[
        m(x,y,y)=x.
\]
Similarly,
\[
\begin{aligned}
        m(y,y,x)
        &=((y\omin y)\op x)\wedge((x\omin y)\op y) \\
        &=x\wedge((x\omin y)\op y).
\end{aligned}
\]
The same inequality \(x\leq (x\omin y)\op y\) gives
\[
        m(y,y,x)=x.
\]
Thus \(m\) is a Mal'tsev term.  If \(A\) is a topological \(MV\)-algebra, all term operations are continuous.  Hence \(m:A^3\to A\) is continuous, and the underlying topological space of \(A\) is a Mal'tsev space.
\end{proof}

\begin{remark}\label{rem:UA}
The existence of a Mal'tsev term also explains why many Hausdorff-type separation properties of topological groups have analogues for topological \(MV\)-algebras.  In universal algebra, the variety of \(MV\)-algebras is congruence permutable.  Taylor's theorem, and later work of Kearnes and Sequeira, show that in congruence-permutable varieties a \(T_0\) topological algebra is Hausdorff; see \cite{Taylor1977,KearnesSequeira2002}.  The present paper uses a different consequence of the same Mal'tsev phenomenon: the product theorem for pseudocompact Mal'tsev spaces.
\end{remark}

We now prove the first main theorem.

\begin{theorem}\label{thm:pseudocompact-product}
Let \(\{A_\lambda:\lambda\in\Lambda\}\) be a family of pseudocompact topological \(MV\)-algebras whose underlying spaces are Tychonoff.  Then the product
\[
        A=\prod_{\lambda\in\Lambda} A_\lambda,
\]
with the product topology and pointwise \(MV\)-operations, is a pseudocompact topological \(MV\)-algebra.
\end{theorem}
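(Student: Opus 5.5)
The plan is to separate the statement into an algebraic part and a topological part, and to reduce the topological part to Theorem~\ref{thm:RU} via Proposition~\ref{prop:mv-malcev}.

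First, the algebraic part. Since the \(MV\)-algebras form a variety defined by identities, the product \(A=\prod_{\lambda\in\Lambda}A_\lambda\) with coordinatewise operations is again an \(MV\)-algebra. Its operations \(\op\) and \({}^*\) are continuous for the product topology. Indeed, composing \(\op:A\times A\to A\) with the projection \(p_\lambda\) gives the map \((x,y)\mapsto p_\lambda(x)\op_\lambda p_\lambda(y)\). This is a composition of continuous maps, because \(\op_\lambda\) is continuous on \(A_\lambda\). A map into a product is continuous exactly when all its compositions with projections are continuous, so \(\op\) is continuous. The same argument applies to \({}^*\). Hence \(A\) is a topological \(MV\)-algebra. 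It is also Tychonoff, since each factor is and products of Tychonoff spaces are Tychonoff.

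Second, the topological part. By Proposition~\ref{prop:mv-malcev}, each \(A_\lambda\) carries the continuous Mal'tsev operation
\[
m_\lambda(x,y,z)=((x\omin y)\op z)\wedge((z\omin y)\op x),
\]
so each underlying space \(A_\lambda\) is a pseudocompact Tychonoff Mal'tsev space. Theorem~\ref{thm:RU} then gives directly that \(\prod_{\lambda}A_\lambda\) is pseudocompact.

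As a consistency check, Proposition~\ref{prop:mv-malcev} applied to \(A\) itself makes \(A\) a Mal'tsev space, and its Mal'tsev operation is the coordinatewise product of the \(m_\lambda\). This is not needed for the argument.

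The main point requiring care is that the hypotheses of Theorem~\ref{thm:RU} are the ones we supply. Pseudocompactness there refers to Tychonoff spaces, which is why the statement assumes the factors are Tychonoff, and Mal'tsev spaces there are just spaces admitting a continuous Mal'tsev map. No group structure or uniformity is used, so Proposition~\ref{prop:mv-malcev} feeds in directly. Beyond this, the proof is routine.
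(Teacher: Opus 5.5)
Your proposal is correct and follows essentially the same route as the paper: continuity of the coordinatewise operations in the product topology, then Proposition~\ref{prop:mv-malcev} to make each factor a pseudocompact Mal'tsev space, then Theorem~\ref{thm:RU}. Your explicit remark that the product is Tychonoff is a small addition that the paper leaves implicit.
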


\begin{proof}
First, \(A\) is a topological \(MV\)-algebra.  Indeed, the operations are defined coordinatewise:
\[
        (x_\lambda)_\lambda\op(y_\lambda)_\lambda
        =(x_\lambda\op y_\lambda)_\lambda,
        \qquad
        (x_\lambda)_\lambda^*=(x_\lambda^*)_\lambda.
\]
The continuity of these maps follows from the continuity of the coordinate operations on each \(A_\lambda\) and the definition of the product topology.

It remains to prove pseudocompactness.  By Proposition \ref{prop:mv-malcev}, each \(A_\lambda\) is a Mal'tsev space.  By hypothesis, each \(A_\lambda\) is pseudocompact. Hence each \(A_\lambda\) is a pseudocompact Mal'tsev space.  By the theorem of Reznichenko and Uspenskij, the product of any family of pseudocompact Mal'tsev spaces is pseudocompact.  Therefore \(A=\prod_{\lambda\in\Lambda} A_\lambda\) is pseudocompact.
\end{proof}

\begin{corollary}\label{cor:powers}
If \(A\) is a pseudocompact Tychonoff topological \(MV\)-algebra and \(\Lambda\) is any index set, then the power \(A^\Lambda\), with pointwise \(MV\)-operations, is a pseudocompact topological \(MV\)-algebra.
\end{corollary}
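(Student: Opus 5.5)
This is a direct specialization of Theorem \ref{thm:pseudocompact-product}. I will take the constant family \(\{A_\lambda:\lambda\in\Lambda\}\) with \(A_\lambda=A\) for every \(\lambda\in\Lambda\). Each member is then a pseudocompact topological \(MV\)-algebra with Tychonoff underlying space, so all hypotheses of Theorem \ref{thm:pseudocompact-product} hold.

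That theorem then gives that \(\prod_{\lambda\in\Lambda}A_\lambda\), with the product topology and coordinatewise operations, is a pseudocompact topological \(MV\)-algebra. This product is by definition the power \(A^\Lambda\) with pointwise operations \((x\op y)(\lambda)=x(\lambda)\op y(\lambda)\) and \(x^*(\lambda)=x(\lambda)^*\). So the conclusion is exactly the statement of the corollary.

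Two side remarks. If \(\Lambda=\emptyset\), the power is the one-point trivial \(MV\)-algebra, which is trivially pseudocompact; this case is also covered formally by the theorem. The product of Tychonoff spaces is Tychonoff, so \(A^\Lambda\) stays within the paper's standing convention.

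There is no real obstacle here. All the substantive work is in Proposition \ref{prop:mv-malcev} and the Reznichenko--Uspenskij Theorem \ref{thm:RU}, both of which are already invoked in the proof of Theorem \ref{thm:pseudocompact-product}. The only point to check is the identification of the power with the product of the constant family, and that holds by definition.
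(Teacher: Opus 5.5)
Your proposal is correct and is exactly the paper's argument: apply Theorem~\ref{thm:pseudocompact-product} to the constant family \(A_\lambda=A\) and identify the resulting product with the power \(A^\Lambda\). Your side remarks on \(\Lambda=\emptyset\) and on preservation of the Tychonoff property are also correct.
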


\begin{proof}
Apply Theorem \ref{thm:pseudocompact-product} to the constant family \(A_\lambda=A\).
\end{proof}

\begin{remark}
Theorem \ref{thm:pseudocompact-product} shows that topological \(MV\)-algebras inherit a substantial part of the Comfort--Ross phenomenon even though they are not groups.  This places topological \(MV\)-algebras within the broader class of topological Mal'tsev algebras.  Thus, the product theorem is not an accident of the particular \(MV\)-operations; it follows from a robust Mal'tsev mechanism.
\end{remark}

Applying Theorem \ref{thm:U} and Proposition \ref{prop:mv-malcev} we obtain the second main theorem:
\begin{theorem}
Every $\sigma$-compact topological \(MV\)-algebra is ccc.
\end{theorem}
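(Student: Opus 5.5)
The plan is to reduce the statement directly to Uspenskij's theorem (Theorem \ref{thm:U}) via Proposition \ref{prop:mv-malcev}. Let \(A\) be a \(\sigma\)-compact topological \(MV\)-algebra, so that \(A=\bigcup_{n\in\mathbb{N}}K_n\) with each \(K_n\) compact.

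First, I will check that \(A\) falls within the scope of Theorem \ref{thm:U}. By the paper's standing convention, \(A\) is Tychonoff. By Proposition \ref{prop:mv-malcev}, the term
\[
        m(x,y,z)=((x\omin y)\op z)\wedge((z\omin y)\op x)
\]
satisfies \(m(x,y,y)=m(y,y,x)=x\). It is a composition of \(\op\), \({}^*\) and the term operations \(\omin,\wedge\). All of these are continuous because \(\op\) and \({}^*\) are continuous by definition of a topological \(MV\)-algebra. Hence \(m:A^3\to A\) is continuous, and \(A\) is a Mal'tsev space.

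Second, since \(A\) is \(\sigma\)-compact by hypothesis, it is a \(\sigma\)-compact Mal'tsev space. Theorem \ref{thm:U} then gives that \(A\) is ccc: every disjoint family of nonempty open subsets of \(A\) is countable.

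There is no real obstacle here. The only point to watch is that the hypotheses of Uspenskij's theorem match: it is stated for Tychonoff (or at least regular) spaces, and this is ensured by the global Tychonoff assumption in the paper. If one wished to drop that assumption, the natural first move would be to note that topological \(MV\)-algebras are congruence permutable. By Remark \ref{rem:UA}, \(T_0\) then already yields Hausdorffness, but regularity would still need a separate argument. I would simply keep the standing Tychonoff hypothesis.
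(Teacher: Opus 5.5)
Your proof is correct and matches the paper's argument exactly. The paper likewise obtains the result by combining Proposition \ref{prop:mv-malcev} (every topological \(MV\)-algebra is a Mal'tsev space) with Uspenskij's theorem (Theorem \ref{thm:U}), relying on the standing Tychonoff convention.
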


\section{Stone--\v Cech compactifications as compact MV-algebras}\label{sec:beta}
We now derive a compactification consequence. Those results are analogous to the compact group compactification of a pseudocompact topological group, but its proof uses the Mal'tsev-space product theorem and Glicksberg's theorem on Stone-\v{C}ech compactifications of products.

\begin{theorem}\label{thm:beta-compactification}
Let \(A\) be a pseudocompact topological \(MV\)-algebra.  Then the Stone-\v{C}ech compactification \(\beta A\) carries a natural compact topological \(MV\)-algebra structure such that the canonical embedding
\[
        e_A:A\longrightarrow \beta A
\]
is a dense topological \(MV\)-embedding.
\end{theorem}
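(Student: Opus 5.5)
The plan is to extend the binary operation \(\op\) and the unary operation \({}^*\) from \(A\) to \(\beta A\) by continuity, and then to transfer the \(MV\)-axioms by density. The unary case is immediate. The map \({}^*:A\to A\subseteq\beta A\) is continuous into the compact space \(\beta A\), so it extends uniquely to a continuous map \(\beta A\to\beta A\), again denoted \({}^*\).

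The binary case is the main obstacle, because a continuous map \(A\times A\to\beta A\) does not in general extend to \(\beta A\times\beta A\). Here we use the product theorem. By Theorem \ref{thm:pseudocompact-product} (or Corollary \ref{cor:powers}), \(A\times A\) is pseudocompact, and in fact so is every finite power \(A^n\). Glicksberg's theorem \cite{Glicksberg1959} says that if \(X\times Y\) is pseudocompact, then \(\beta(X\times Y)=\beta X\times\beta Y\) canonically. Hence
\[
        \beta(A\times A)=\beta A\times\beta A ,
\]
and the continuous map \(e_A\circ\op:A\times A\to\beta A\) extends uniquely to a continuous map \(\op:\beta A\times\beta A\to\beta A\). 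The same argument applied to \(A^n\) gives \(\beta(A^n)=(\beta A)^n\) with \(A^n\) dense. We fix \(0=e_A(0)\). By construction \(e_A\) is then a homomorphism, and it is a topological embedding onto a dense subspace because \(A\) is Tychonoff.

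Next we verify the axioms. Each Chang axiom is an identity \(s(x_1,\dots,x_n)=t(x_1,\dots,x_n)\) between \(MV\)-terms in at most three variables. Both \(s\) and \(t\), interpreted in \(\beta A\) with the extended operations, are continuous maps \((\beta A)^n\to\beta A\), since they are compositions of continuous maps. They agree on the subset \(e_A(A)^n\), which is dense in \((\beta A)^n\). Since \(\beta A\) is Hausdorff, they agree everywhere. So \(\beta A\) is an \(MV\)-algebra whose operations \(\op\) and \({}^*\) are continuous, that is, a compact topological \(MV\)-algebra, and \(e_A\) is a dense topological \(MV\)-embedding.

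For naturality we use uniqueness. Any compact Hausdorff \(MV\)-topology on \(\beta A\) for which \(e_A\) is a homomorphism has operations agreeing with ours on the dense sets \(A^2\) and \(A\), so it must be the structure constructed above.

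The single point that needs care is therefore the appeal to Glicksberg's theorem. It needs pseudocompactness of \(A\times A\), and this is supplied precisely by the Mal'tsev structure (Proposition \ref{prop:mv-malcev} together with Theorem \ref{thm:RU}). Without it, \(\op\) might fail to extend.
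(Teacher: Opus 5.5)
Your proposal is correct and follows essentially the same route as the paper. It obtains pseudocompactness of \(A\times A\) from the Mal'tsev structure and Theorem \ref{thm:RU}, identifies \(\beta(A\times A)\) with \(\beta A\times\beta A\) via Glicksberg's theorem, extends \(\op\) and \({}^*\) by the universal property, and transfers the \(MV\)-identities by density and the Hausdorff property. Your appeal to \(\beta(A^n)=(\beta A)^n\) is unnecessary, since density of \(e_A(A)^n\) in \((\beta A)^n\) follows directly from the product topology, and your uniqueness remark is a harmless addition.
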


\begin{proof}
Since \(A\) is pseudocompact and is a Mal'tsev space by Proposition \ref{prop:mv-malcev}, the product \(A\times A\) is pseudocompact by Theorem \ref{thm:RU}.  By Glicksberg's theorem on Stone-\v{C}ech compactifications of products \cite{Glicksberg1959}, the canonical map
\[
        \beta(A\times A)\longrightarrow \beta A\times \beta A
\]
is a homeomorphism.

Consider the continuous map
\[
        e_A\circ \op:A\times A\longrightarrow \beta A,
\]
where \(\op:A\times A\to A\) is the \(MV\)-sum.  By the universal property of \(\beta(A\times A)\), this map extends uniquely to a continuous map
\[
        \widehat{\op}:\beta(A\times A)\longrightarrow \beta A.
\]
Using the identification \(\beta(A\times A)=\beta A\times \beta A\), we regard \(\widehat{\op}\) as a continuous binary operation
\[
        \widehat{\op}:\beta A\times \beta A\longrightarrow \beta A.
\]
Similarly, the continuous map
\[
        e_A\circ{}^*:A\longrightarrow \beta A
\]
extends uniquely to a continuous map
\[
        \widehat{*}:\beta A\longrightarrow \beta A.
\]
Put
\[
        \widehat 0=e_A(0).
\]
We claim that
\[
        (\beta A,\widehat{\op},\widehat{*},\widehat 0)
\]
is an \(MV\)-algebra.  Each \(MV\)-identity is an equality between two continuous term functions on a finite power \((\beta A)^n\). These two continuous functions agree on the dense subspace \(A^n\), because the identities hold in \(A\). Since \(\beta A\) is Hausdorff, the two term functions agree on all of \((\beta A)^n\).  Therefore all \(MV\)-identities hold on \(\beta A\).

The operations \(\widehat{\op}\) and \(\widehat{*}\) are continuous by construction, so \(\beta A\) is a compact topological \(MV\)-algebra. The embedding \(e_A\) preserves \(0\), \(\op\), and \({}^*\), again by construction. Its image is dense in \(\beta A\).  Thus \(e_A:A\to\beta A\) is a dense topological \(MV\)-embedding.
\end{proof}

\begin{corollary}\label{cor:beta-product}
Let \(\{A_\lambda:\lambda\in\Lambda\}\) be a family of pseudocompact topological \(MV\)-algebras. Then
\[
        \beta\left(\prod_{\lambda\in\Lambda}A_\lambda\right)
\]
has a natural compact topological \(MV\)-algebra structure.  Moreover, by Glicksberg's theorem,
\[
        \beta\left(\prod_{\lambda\in\Lambda}A_\lambda\right)
        \cong
        \prod_{\lambda\in\Lambda}\beta A_\lambda
\]
as compact spaces, and the isomorphism is compatible with the coordinatewise \(MV\)-operations.
\end{corollary}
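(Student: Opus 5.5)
The plan has three steps: show that the product \(P=\prod_{\lambda}A_\lambda\) is pseudocompact, then apply Theorem \ref{thm:beta-compactification} to \(P\), and finally identify \(\beta P\) with \(\prod_\lambda\beta A_\lambda\) compatibly with the operations.

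\textbf{Step 1.} Note that \(P\) is pseudocompact. By Theorem \ref{thm:pseudocompact-product} it is a pseudocompact topological \(MV\)-algebra; this uses the standing Tychonoff assumption on each \(A_\lambda\). Theorem \ref{thm:beta-compactification} applied to \(P\) then gives \(\beta P\) a compact topological \(MV\)-algebra structure. This structure is obtained by extending \(e_P\circ\op\) and \(e_P\circ{}^*\) through \(\beta(P\times P)\cong\beta P\times\beta P\). The last identification holds because \(P\times P\cong\prod_\lambda(A_\lambda\times A_\lambda)\) is again a product of pseudocompact Mal'tsev spaces, hence pseudocompact by Theorem \ref{thm:RU}.

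\textbf{Step 2.} I will obtain the homeomorphism from Glicksberg's theorem \cite{Glicksberg1959}: for a product of infinite Tychonoff spaces whose product is pseudocompact, the canonical map \(\Phi:\beta P\to\prod_\lambda\beta A_\lambda\) is a homeomorphism. Here \(\Phi\) is the unique continuous extension of \(\prod_\lambda e_{A_\lambda}:P\to\prod_\lambda\beta A_\lambda\). The hypothesis needed is exactly pseudocompactness of \(P\), which Step 1 provides. The finite or degenerate factors need a small remark. Finite Tychonoff factors are discrete and compact, so they satisfy \(\beta A_\lambda=A_\lambda\). Splitting them off therefore reduces to the case Glicksberg treats, possibly after one more application of his theorem to a product of a compact space with a pseudocompact space.

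\textbf{Step 3.} This is the compatibility step. The product \(\prod_\lambda\beta A_\lambda\) carries the coordinatewise operations, where each \(\beta A_\lambda\) has the structure from Theorem \ref{thm:beta-compactification}. These operations are continuous. I must check that \(\Phi(u\,\widehat{\op}\,v)=\Phi(u)\op\Phi(v)\) and \(\Phi(u^{\widehat *})=\Phi(u)^*\). Both sides of each equation are continuous functions on \(\beta P\times\beta P\) (respectively \(\beta P\)). They agree on the dense subset \(e_P(P)\times e_P(P)\), because each \(e_{A_\lambda}\) is an \(MV\)-homomorphism by Theorem \ref{thm:beta-compactification}. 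The target is Hausdorff, so the two sides agree everywhere. Hence \(\Phi\) is an isomorphism of compact topological \(MV\)-algebras.

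The main obstacle is Step 2: making sure Glicksberg's theorem applies in the stated generality (arbitrary index sets, with factors possibly finite) and that its hypothesis is exactly pseudocompactness of the full product. The algebraic compatibility in Step 3 is routine by density.
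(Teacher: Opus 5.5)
Your proposal is correct and follows the paper's proof. It obtains pseudocompactness of the product via Theorem~\ref{thm:pseudocompact-product}, then applies Theorem~\ref{thm:beta-compactification} and Glicksberg's theorem, and gets compatibility of the operations from agreement of continuous maps on a dense subset, which you spell out more explicitly than the paper does. Your extra handling of finite factors is harmless but unnecessary: the direction of Glicksberg's theorem you use (a pseudocompact product gives \(\beta\prod A_\lambda\cong\prod\beta A_\lambda\)) holds for arbitrary Tychonoff factors, and the infiniteness hypothesis is needed only for the converse.
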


\begin{proof}
The product \(\prod_{\lambda\in\Lambda}A_\lambda\) is pseudocompact by Theorem \ref{thm:pseudocompact-product}.  Hence Theorem \ref{thm:beta-compactification} applies. Glicksberg's theorem identifies the Stone-\v{C}ech compactification of the product with the product of the Stone-\v{C}ech compactifications whenever the product is pseudocompact.  Since all operations are defined coordinatewise and the extensions are unique, the resulting compactification isomorphism respects the \(MV\)-operations.
\end{proof}

\begin{remark}
Theorem \ref{thm:beta-compactification} gives an important compactification tool. If \(A\) is pseudocompact, then \(A\) embeds densely into a compact topological \(MV\)-algebra \(\beta A\). This is useful in two ways.  First, many questions about \(A\) can be studied inside the compact \(MV\)-algebra \(\beta A\).  Second, continuous homomorphisms from \(A\) into compact Hausdorff topological \(MV\)-algebras can often be extended through \(\beta A\), by the universal property of the Stone-\v{C}ech compactification.
\end{remark}

\begin{theorem}\label{thm:compact-reflection}
Let \(A\) be a pseudocompact topological \(MV\)-algebra and let \(K\) be a compact topological \(MV\)-algebra. Every continuous \(MV\)-homomorphism
\[
        f:A\to K
\]
extends uniquely to a continuous \(MV\)-homomorphism
\[
        \beta f:\beta A\to K.
\]
Consequently, \(\beta\) is the compact reflection on the category of Tychonoff pseudocompact topological \(MV\)-algebras and continuous \(MV\)-homomorphisms.
\end{theorem}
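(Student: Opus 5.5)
The plan is to use the universal property of \(\beta A\) for the underlying continuous map and then transfer the homomorphism identities by density, as in the proof of Theorem \ref{thm:beta-compactification}. Implicitly \(K\) is Hausdorff; the paper's standing Tychonoff convention gives this. Since \(K\) is compact Hausdorff and \(f:A\to K\) is continuous, the Stone--\v{C}ech universal property yields a unique continuous map \(\beta f:\beta A\to K\) with \(\beta f\circ e_A=f\). Uniqueness of a continuous extension is automatic, because \(e_A(A)\) is dense in \(\beta A\) and \(K\) is Hausdorff. So the only real content is that \(\beta f\) is an \(MV\)-homomorphism for the structure \((\widehat{\op},\widehat{*},\widehat 0)\) constructed in Theorem \ref{thm:beta-compactification}.

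To check that \(\beta f\) preserves \(\op\), I will compare the two continuous maps
\[
        F_1(u,v)=\beta f(u\,\widehat{\op}\,v),\qquad F_2(u,v)=\beta f(u)\op_K\beta f(v)
\]
from \(\beta A\times\beta A\) to \(K\). Here \(F_1\) is continuous because \(\widehat{\op}\) is continuous on \(\beta A\times\beta A\); this is exactly where Glicksberg's identification \(\beta(A\times A)=\beta A\times\beta A\) from Theorem \ref{thm:beta-compactification} is used. \(F_2\) is continuous since \(\op_K\) is continuous. On the set \(e_A(A)\times e_A(A)\), which is dense in \(\beta A\times\beta A\), both maps equal \(f(x\op y)=f(x)\op f(y)\), because \(f\) is a homomorphism and \(e_A\) preserves \(\op\). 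Since \(K\) is Hausdorff, \(F_1=F_2\).

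The same argument on \(\beta A\) handles \({}^*\). For the constant, \(\beta f(\widehat 0)=f(0)=0_K\) directly.

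For the reflection statement, I will note two facts. By Theorem \ref{thm:beta-compactification}, \(\beta A\) is a compact topological \(MV\)-algebra containing \(A\) densely via the continuous homomorphism \(e_A\). The extension property just proved is precisely the statement that \(e_A\) is a universal arrow from \(A\) into the full subcategory of compact (Hausdorff) topological \(MV\)-algebras. Standard category theory then makes \(A\mapsto\beta A\) a left adjoint to the inclusion, i.e.\ a reflection. Functoriality on morphisms \(g:A\to B\) comes from applying the extension property to \(e_B\circ g\), with uniqueness giving \(\beta(h\circ g)=\beta h\circ\beta g\).

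The main obstacle is only the continuity of \(\widehat{\op}\) as a map on \(\beta A\times\beta A\) rather than on \(\beta(A\times A)\). Without pseudocompactness of \(A\times A\) (Theorem \ref{thm:RU}) and Glicksberg's theorem, \(F_1\) would not be known to be continuous on the product and the density argument would fail. Since this was settled in Theorem \ref{thm:beta-compactification}, I expect the rest to be routine.
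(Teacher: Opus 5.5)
Your proposal is correct and follows essentially the same route as the paper: you extend \(f\) via the Stone--\v{C}ech universal property, then verify preservation of \(\oplus\), \({}^*\) and \(0\) by comparing continuous maps into the Hausdorff space \(K\) that agree on the dense set \(A\times A\) (resp.\ \(A\)). Your added remarks make explicit two points the paper leaves implicit: Glicksberg's theorem is what makes \(\widehat{\oplus}\) continuous on \(\beta A\times\beta A\), and the extension property is exactly what gives the categorical reflection.
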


\begin{proof}
Since \(K\) is compact Hausdorff, the continuous map \(f:A\to K\) extends uniquely to a continuous map \(\beta f:\beta A\to K\).  It remains to check that \(\beta f\) is an \(MV\)-homomorphism.  For addition, the two maps
\[
        \beta A\times\beta A\to K,
        \qquad
        (u,v)\mapsto \beta f(u\widehat\oplus v)
\]
and
\[
        (u,v)\mapsto \beta f(u)\oplus \beta f(v)
\]
are continuous and agree on the dense subset \(A\times A\).  Hence they agree everywhere.  The proof for involution and for \(0\) is identical.  Thus \(\beta f\) is an \(MV\)-homomorphism.  The uniqueness is the usual uniqueness in the Stone--\v Cech extension property.
\end{proof}

\begin{corollary}\label{cor:beta-functor}
If \(h:A\to B\) is a continuous \(MV\)-homomorphism between Tychonoff pseudocompact topological \(MV\)-algebras, then the Stone--\v Cech extension
\[
        \beta h:\beta A\to\beta B
\]
is a continuous \(MV\)-homomorphism.  If \(h\) has dense image, then \(\beta h\) is onto.  In particular, if \(h\) is onto, then \(\beta h\) is onto.
\end{corollary}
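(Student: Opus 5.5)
The plan has three steps. First, obtain \(\beta h\) and show it is a homomorphism using Theorem \ref{thm:compact-reflection}. Second, prove that \(\beta h\) is onto when \(h\) has dense image, by combining compactness of \(\beta A\) with density. Third, deduce the statement for surjective \(h\).

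For the first step, consider the composite \(e_B\circ h:A\to\beta B\). It is continuous, and it is an \(MV\)-homomorphism because \(e_B\) is a topological \(MV\)-embedding by Theorem \ref{thm:beta-compactification}. Since \(B\) is pseudocompact, Theorem \ref{thm:beta-compactification} also makes \(\beta B\) a compact (Hausdorff) topological \(MV\)-algebra. Theorem \ref{thm:compact-reflection}, applied with \(K=\beta B\) and \(f=e_B\circ h\), then gives a unique continuous \(MV\)-homomorphism \(\beta A\to\beta B\) extending \(e_B\circ h\). This map is, by definition, the Stone--\v Cech extension \(\beta h\). Uniqueness of continuous extensions from the dense set \(e_A(A)\) into a Hausdorff space guarantees that it coincides with the purely topological extension. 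Hence \(\beta h\) is a continuous \(MV\)-homomorphism.

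For surjectivity, assume \(h(A)\) is dense in \(B\). Since \(e_B(B)\) is dense in \(\beta B\) and \(e_B\) is continuous, \(e_B(h(A))\) is dense in \(e_B(B)\), hence dense in \(\beta B\). The image \(\beta h(\beta A)\) contains \(\beta h(e_A(A))=e_B(h(A))\). It is also compact, being a continuous image of the compact space \(\beta A\). A compact subset of the Hausdorff space \(\beta B\) is closed, so \(\beta h(\beta A)\) is a closed set containing a dense subset, and therefore equals \(\beta B\).

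The final clause follows immediately, because a surjection trivially has dense image.

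The only point needing care is the identification in the first step: the homomorphism produced by Theorem \ref{thm:compact-reflection} must be the same map as the topological \(\beta h\). This reduces to uniqueness of extensions, as noted above. The rest is standard compactness-and-density reasoning, so I expect no real obstacle.
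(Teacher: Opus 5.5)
Your proposal is correct and follows the paper's proof essentially verbatim: apply Theorem~\ref{thm:compact-reflection} with \(K=\beta B\) and \(f=e_B\circ h\), then observe that the compact, hence closed, image \(\beta h(\beta A)\) contains the dense set \(e_B(h(A))\). Your extra remarks on why this homomorphism is the topological Stone--\v{C}ech extension, and on why \(e_B(h(A))\) is dense in \(\beta B\), simply spell out details the paper leaves implicit.
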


\begin{proof}
Apply Theorem \ref{thm:compact-reflection} with \(K=\beta B\) and with the map \(e_B\circ h:A\to\beta B\).  If \(h(A)\) is dense in \(B\), then \(e_B(h(A))\) is dense in \(\beta B\).  The image \(\beta h(\beta A)\) is compact, hence closed, and contains this dense subset; therefore it is all of \(\beta B\).
\end{proof}

\begin{proposition}\label{prop:closure-ideal}
Let \(I\) be an ideal of a pseudocompact topological \(MV\)-algebra \(A\) and \(\iota_1:A\hookrightarrow\bet A\) the naturally injective. Then \(\cl_{\beta A}\iota_1(I)\) is a closed ideal of \(\beta A\).
\end{proposition}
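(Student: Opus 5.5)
Write \(J=\cl_{\beta A}\iota_1(I)\). Closedness is automatic, so the work is to check the three ideal axioms for \(J\) inside the compact topological \(MV\)-algebra \(\beta A\) of Theorem \ref{thm:beta-compactification}. Throughout I will use two facts: \(\iota_1\) is an \(MV\)-embedding with dense image, and \(\beta A\) is a compact Hausdorff topological \(MV\)-algebra, so all term operations \(\op,\od,\wedge,\vee,\omin\) on \(\beta A\) are jointly continuous.

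\emph{Zero and closure under \(\op\).} First, \(\widehat 0=\iota_1(0)\in\iota_1(I)\subseteq J\). For sums, \(\iota_1(I)\op\iota_1(I)=\iota_1(I\op I)\subseteq\iota_1(I)\), because \(\iota_1\) is a homomorphism and \(I\) is closed under \(\op\). Since \(\widehat\op\) is continuous,
\[
\widehat\op(J\times J)=\widehat\op\bigl(\cl(\iota_1(I)\times\iota_1(I))\bigr)\subseteq \cl\,\widehat\op(\iota_1(I)\times\iota_1(I))\subseteq J .
\]

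\emph{Downward closure.} This is the main obstacle, because the condition \(v\le u\) is not an open condition, so it cannot be approximated from inside \(\iota_1(A)\) directly. The plan is to replace the order by the lattice term \(\wedge\), using the identity \(v\le u\iff v=v\wedge u\). I will show the stronger statement
\[
\beta A\wedge J\subseteq J .
\]
This is enough: if \(v\le u\in J\), then \(v=v\wedge u\in\beta A\wedge J\subseteq J\).

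To prove \(\beta A\wedge J\subseteq J\), first look at \(\iota_1(A)\wedge\iota_1(I)=\iota_1(A\wedge I)\). For \(a\in A\) and \(i\in I\) we have \(a\wedge i\le i\), so \(a\wedge i\in I\). Hence \(\iota_1(A)\wedge\iota_1(I)\subseteq\iota_1(I)\). Now \(\iota_1(A)\times\iota_1(I)\) is dense in \(\beta A\times J\). Since \(\wedge:\beta A\times\beta A\to\beta A\) is continuous,
\[
\beta A\wedge J\subseteq\cl\bigl(\iota_1(A)\wedge\iota_1(I)\bigr)\subseteq\cl\,\iota_1(I)=J .
\]
The only point to double-check is that the lattice operations on \(\beta A\) are the term operations built from \(\widehat\op\) and \(\widehat *\), so that \(\wedge\) on \(\beta A\) really extends \(\wedge\) on \(A\). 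This holds because \(\iota_1\) preserves \(\op\) and \({}^*\), and so preserves every term operation.

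Together these steps show that \(J\) is a closed ideal of \(\beta A\). The argument never uses that \(I\) is closed in \(A\); it needs only the density of \(\iota_1(A)\) and the continuity of the operations on \(\beta A\).
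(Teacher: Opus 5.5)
Your proof is correct and follows essentially the paper's argument. You get \(0\in J\) directly and closure under \(\oplus\) from continuity of \(\widehat\oplus\) on \(\cl(\iota_1(I)\times\iota_1(I))\); downward closure comes from writing \(v=v\wedge u\), approximating \(v\) from \(\iota_1(A)\) and \(u\) from \(\iota_1(I)\), and using \(a\wedge i\le i\in I\). The only difference is presentational: you work with \(f(\cl S)\subseteq\cl f(S)\) and the inclusion \(\beta A\wedge J\subseteq J\), whereas the paper uses nets indexed by a product directed set.
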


\begin{proof}
Let \(J=\cl_{\beta A}\iota_1(I)\).  Clearly \(0\in J\).  If \((u,v)\in J\times J\), then there a net \((u_\alpha,v_\alpha)\in I\times I\) such that converging to \((u,v)\). Thus, the nets \((u_\alpha)\) and \((v_\alpha)\) converging to \(u\)and \(v\), respectively. Since \(\oplus\) is continuous, the net \(u_\alpha\oplus v_\alpha\), indexed by the product directed set, belongs to \(I\) and converges to \(u\oplus v\).  Hence \(u\oplus v\in J\).

It remains to prove downward closedness.  Suppose \(0\leq b\leq c\in J\).  Choose a net \((c_\alpha)\) in \(I\) converging to \(c\).  Since \(A\) is dense in \(\beta A\), choose a net \((b_\gamma)\) in \(A\) converging to \(b\).  Then
\[
        b_\gamma\wedge c_\alpha\longrightarrow b\wedge c=b.
\]
For each \((\gamma,\alpha)\), we have \(b_\gamma\wedge c_\alpha\leq c_\alpha\in I\).  Since \(I\) is downward closed, \(b_\gamma\wedge c_\alpha\in I\).  Hence \(b\in J\).  Therefore \(J\) is an ideal.
\end{proof}

\begin{proposition}\cite[Corollary 3.5]{XieYang}\label{prop:natural-quotients}
Let \(A\) be a topological \(MV\)-algebra and \(I\) an ideal of \(A\).  Then \(A/I\), endowed with the quotient topology, is a topological \(MV\)-algebra, and the natural quotient homomorphism
\[
        q:A\to A/I
\]
is an open map.
\end{proposition}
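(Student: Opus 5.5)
This proposition is cited from \cite{XieYang}, but I would prove it directly, in two parts: first the continuity of the operations on \(A/I\), then the openness of \(q\). I would begin with openness, since it is what makes the continuity argument work.

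\emph{Openness of \(q\).} Let \(U\subseteq A\) be open. I will show that the saturation
\[
q^{-1}(q(U))=\{x\in A: d(x,u)\in I \text{ for some } u\in U\}
\]
is open. Take such an \(x\), with \(u\in U\) and \(d(x,u)\in I\). The plan is to move the witness \(u\) continuously with \(x\), using the Mal'tsev term \(m\) of Proposition \ref{prop:mv-malcev}. Define the continuous map \(g(y)=m(y,x,u)\). Then \(g(x)=m(x,x,u)=u\in U\), so \(g(y)\in U\) for all \(y\) in some neighbourhood \(V\) of \(x\).

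It remains to check that \(y\equiv_I g(y)\) for every \(y\). The quotient \(A\to A/I\) is an \(MV\)-homomorphism and \(m\) is a term, so in \(A/I\) we get
\[
[g(y)]=m([y],[x],[u])=m([y],[x],[x])=[y],
\]
because \([u]=[x]\). Hence \(V\subseteq q^{-1}(q(U))\), the saturation is open, and \(q\) is open. The key point is that no translations are needed: the Mal'tsev identity replaces them. This also dissolves what I expected to be the main obstacle, namely working with saturations of open sets in the absence of invertible translations.

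\emph{Continuity of the operations.} For \({}^*\), note that \(q\circ{}^*={}^*_{A/I}\circ q\). Since \(q\) is a quotient map, the map \({}^*_{A/I}\) is continuous. For \(\oplus\) the difficulty is that \(q\times q\) need not be a quotient map in general. Here openness saves us: \(q\times q\) is an open continuous surjection, hence a quotient map. Since \(\oplus_{A/I}\circ(q\times q)=q\circ\oplus\) is continuous, \(\oplus_{A/I}\) is continuous.

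Together these show that \(A/I\) with the quotient topology is a topological \(MV\)-algebra and that \(q\) is open. No Hausdorffness of \(A/I\) is claimed, and none is needed.
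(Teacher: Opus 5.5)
Your proof is correct. The paper does not prove this proposition; it imports it from Corollary 3.5 of Xie--Yang. So there is no in-text argument to match, and what you give is a self-contained substitute.

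The two steps work as follows:
\begin{itemize}
\item \emph{Openness.} This is the classical argument for topological algebras with a Mal'tsev term. For $x$ in the saturation with witness $u$, the continuous map $y\mapsto m(y,x,u)$ sends $x$ to $u$ and keeps each $y$ in its own congruence class. In a group it is just the translation $y\mapsto yx^{-1}u$, so it plays exactly the role translations play in the group-theoretic proof.
\item \emph{Continuity of $\oplus$.} You correctly identify that openness is what makes $q\times q$ a quotient map. That is the only delicate point in showing $\oplus$ is continuous on $A/I$.
\end{itemize}

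What your route buys:
\begin{itemize}
\item It depends only on Proposition \ref{prop:mv-malcev}, so the statement becomes a consequence of the paper's own Mal'tsev theme (compare Remark \ref{rem:UA}).
\item It is visibly more general. It works for an arbitrary ideal, closed or not, and indeed for any congruence of any topological algebra with a continuous Mal'tsev term operation.
\item It uses no separation axioms. This matters because the paper applies the proposition to $A/I$ before proving that $A/I$ is Tychonoff.
\end{itemize}
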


Let $A$ be \(MV\)-algebra, for a subset \(B\subseteq A\), define its downward hull by
\[
        \down B=\{a\in A:\text{ there exists }b\in B\text{ such that }a\leq b\}.
\]
\begin{definition}\cite{XieYang20262}\label{def:local-solid}
Let \(A\) be a topological \(MV\)-algebra.  We say that \(A\) is \emph{locally order-stable at zero}, or \emph{locally solid at zero}, if for every neighbourhood \(U\) of \(0\), there exists a neighbourhood \(V\) of \(0\) such that \(  \down V\subseteq U.\)
\end{definition}
For sake of completeness, we give out the proof.
\begin{proposition}\cite{XieYang20262}\label{Pro:XieYang2}
The following Hausdorff topological \(MV\)-algebras are locally order-stable at zero.
\begin{enumerate}
    \item[(a)] Topological \(MV\)-subalgebras of locally order-stable topological \(MV\)-algebras;
    \item [(b)] Continuous open homomorphic images of locally order-stable topological \(MV\)-algebras;
    \item [(c)] Compact topological \(MV\)-algebras.
\end{enumerate}
\end{proposition}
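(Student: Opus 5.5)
I will treat the three items separately. Each one reduces to manipulating neighbourhoods of \(0\) together with the order. Throughout I use that \(a\le b\) implies \(a=a\wedge b\), and that in any topological \(MV\)-algebra the lattice operations are continuous.

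For (a), let \(S\subseteq A\) be a subalgebra with the subspace topology, and assume \(A\) is locally order-stable at zero. Take a neighbourhood \(U\cap S\) of \(0\) in \(S\), where \(U\) is open in \(A\). Choose \(V\) with \(\down V\subseteq U\). Since the order on \(S\) is the restriction of the order on \(A\), the downward hull of \(V\cap S\) computed in \(S\) is contained in \((\down V)\cap S\subseteq U\cap S\). This item is immediate.

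For (b), let \(h:A\to B\) be a continuous open surjective homomorphism, with \(A\) locally order-stable at zero. Fix a neighbourhood \(W\) of \(0_B\).
\begin{itemize}
\item Continuity gives a neighbourhood \(U\) of \(0_A\) with \(h(U)\subseteq W\).
\item Choose \(V\) with \(\down V\subseteq U\), and set \(V'=h(V)\). This is a neighbourhood of \(0_B\) because \(h\) is open.
\item Let \(b\le h(v)\) with \(v\in V\). Pick any \(a\in A\) with \(h(a)=b\), and put \(a'=a\wedge v\).
\item Then \(a'\le v\), so \(a'\in\down V\subseteq U\). Also \(h(a')=b\wedge h(v)=b\).
\item Hence \(b\in h(U)\subseteq W\), which gives \(\down V'\subseteq W\).
\end{itemize}
The only point needing care is the lifting step \(a\mapsto a\wedge v\), which works because \(h\) preserves \(\wedge\).

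For (c), which I expect to be the main obstacle, let \(A\) be compact Hausdorff. I argue by contradiction. Suppose some open \(U\ni 0\) has the property that every neighbourhood \(V\) of \(0\) contains an element \(v_V\) together with some \(a_V\le v_V\) satisfying \(a_V\notin U\).

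Index by the directed set of neighbourhoods of \(0\). Then the net \(v_V\) converges to \(0\). By compactness of \(A\setminus U\), some subnet of \(a_V\) converges to a point \(a\notin U\). The key step is that the order relation \(\{(x,y):x\le y\}=\{(x,y):x\wedge y=x\}\) is closed in \(A\times A\). This holds because \(\wedge\) is continuous and \(A\) is Hausdorff. Passing to the limit along the subnet gives \(a\le 0\), so \(a=0\in U\), a contradiction.

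One subtlety must be checked: the subnet of \((a_V,v_V)\) has to be taken jointly, so that \(v_V\) still tends to \(0\) along it. This is automatic, since a subnet of a net converging to \(0\) still converges to \(0\).
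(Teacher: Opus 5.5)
Your proof is correct. Parts (a) and (b) are essentially the paper's argument. The paper phrases them through downward closed zero-neighbourhoods (in effect taking $D=\down V$), and uses the same lifting $b=b\wedge h(d)=h(a\wedge d)$ in (b).

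For (c) you take a genuinely different route. The paper argues directly with the tube lemma. The continuous map $(a,v)\mapsto a\wedge v$ sends all of $A\times\{0\}$ to $0$. Compactness of $A$ then yields a zero-neighbourhood $V$ with $A\times V$ mapped into a given open $U_0\ni 0$. Finally $b\le v\in V$ gives $b=b\wedge v\in U_0$.

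You instead argue by contradiction with nets, and the key ingredient is that the order relation $\{(x,y):x\wedge y=x\}$ is closed in $A\times A$. This requires the Hausdorff hypothesis, which is legitimately available since the statement concerns Hausdorff algebras. Your handling of the joint subnet is fine.

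The two routes buy different things:
\begin{itemize}
\item The paper's version is constructive: it produces $V$ explicitly. It also works for every compact topological $MV$-algebra with no separation axiom at all.
\item Your version is really the general fact about compact spaces carrying a closed partial order. There, $\down V$ shrinks into any neighbourhood of $\down\{0\}=\{0\}$ as $V$ shrinks to $0$. So it uses the algebra only to establish that $\le$ is closed, and would apply verbatim to any compact pospace with a least element.
\end{itemize}
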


\begin{proof}
For (a), intersect a downward closed zero-neighbourhood base with the subalgebra.  For (b), if \(h:A\to B\) is continuous, open and onto, and if \(D\) is a downward closed zero-neighbourhood in \(A\), then \(h(D)\) is a zero-neighbourhood in \(B\).  It is downward closed: if \(b\leq h(d)\), choose \(a\) with \(h(a)=b\); then
\[
        b=b\wedge h(d)=h(a\wedge d),
\]
and \(a\wedge d\leq d\in D\), so \(a\wedge d\in D\).  Hence \(b\in h(D)\).

For (c), let \(A\) be compact and let \(U\) be a zero-neighbourhood.  Choose an open zero-neighbourhood \(U_0\subseteq U\).  The map \(m:A\times A\to A\), \(m(a,v)=a\wedge v\), is continuous and satisfies \(m(A\times\{0\})\subseteq U_0\).  By the tube lemma, there is a zero-neighbourhood \(V\) with \(A\times V\subseteq m^{-1}(U_0)\).  If \(0\leq b\leq v\in V\), then \(b=b\wedge v\in U_0\subseteq U\).  Thus \(\downarrow V\subseteq U\).
\end{proof}

\begin{lemma}\label{Lemma}
Every pseudocompact topological \(MV\)-algebra \(A\) is locally order-stable(equivalently locally convex).
\end{lemma}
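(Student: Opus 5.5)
The plan is to embed \(A\) into the compact topological \(MV\)-algebra \(\beta A\) and then combine parts (c) and (a) of Proposition \ref{Pro:XieYang2}. By Theorem \ref{thm:beta-compactification}, \(\beta A\) is a compact (Hausdorff) topological \(MV\)-algebra, and the canonical map \(e_A:A\to\beta A\) is a topological \(MV\)-embedding onto a dense subalgebra. Part (c) of Proposition \ref{Pro:XieYang2} then shows that \(\beta A\) is locally order-stable at zero. Part (a) transfers the property to the topological \(MV\)-subalgebra \(e_A(A)\cong A\).

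To keep the argument self-contained, I would verify the transfer in (a) directly rather than appeal to ``downward closed neighbourhood bases''. Let \(U\) be a neighbourhood of \(0\) in \(A\). Since \(e_A\) is an embedding, there is an open set \(W\subseteq\beta A\) with \(e_A^{-1}(W)\subseteq U\) and \(0\in W\). By (c), there is a zero-neighbourhood \(V'\) of \(\beta A\) with \(\down V'\subseteq W\) in \(\beta A\). Put \(V=e_A^{-1}(V')\), which is a zero-neighbourhood in \(A\). Now suppose \(a\leq v\) with \(v\in V\). Since \(e_A\) is an \(MV\)-homomorphism, it preserves the order: \(a\wedge v=a\) gives \(e_A(a)\wedge e_A(v)=e_A(a)\). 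Hence \(e_A(a)\in\down V'\subseteq W\), so \(a\in e_A^{-1}(W)\subseteq U\). Thus \(\down V\subseteq U\).

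The main point to be careful about is the embedding property and the Hausdorffness of \(\beta A\). Theorem \ref{thm:beta-compactification} asserts that \(e_A\) is a topological embedding, which uses that \(A\) is Tychonoff (the standing convention). The tube-lemma argument in (c) needs only compactness of \(\beta A\). I expect no further obstacle.

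For the parenthetical ``equivalently locally convex'', I would cite the equivalence between local order-stability at zero and local convexity from \cite{XieYang20262} (or Weber's notion \cite{Weber2012}). One direction is immediate, since a downward closed set containing \(0\) is convex in the order. For the other, the down-set of a convex zero-neighbourhood is again a zero-neighbourhood contained in the original.
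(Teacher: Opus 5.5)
Your proposal is correct and follows the paper's proof. Theorem~\ref{thm:beta-compactification} makes $\beta A$ a compact topological $MV$-algebra containing $A$ as a topological subalgebra, and parts (c) and (a) of Proposition~\ref{Pro:XieYang2} then give local order-stability, with the equivalence to local convexity taken from the literature; your explicit transfer argument through $e_A^{-1}(W)$ and $e_A^{-1}(V')$ simply spells out the paper's one-line justification of part (a).
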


\begin{proof}
Since \(A\) is a pseudocompact topological \(MV\)-algebra, by Theorem \ref{thm:beta-compactification} the Stone-\v{C}ech compactification \(\beta A\) is a compact topological \(MV\)-algebra and \(A\) as a topological \(MV\)-subalgebra of \(\beta A\). Hence, \(A\) is a locally order-stable topological \(MV\)-algebra by (a) and (c) in Proposition \ref{Pro:XieYang2}. The fact that both locally order-stable properties and locally convex properties are equivalentin topological \(MV\)-algebras (see \cite{XieYang20262}).
\end{proof}

\begin{theorem}\label{thm:quotient-exactness}
Let \(I\) be a closed ideal in a pseudocompact topological \(MV\)-algebra \(A\) and \(\iota_1:A\hookrightarrow\bet A\) is the naturally injective. Then \(\cl_{\beta A}\iota_1(I)\) is a closed ideal of \(\beta A\) and \( \beta A/\cl_{\beta A}\iota_1(I)\cong \beta(A/I)\).
\end{theorem}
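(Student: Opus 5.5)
The plan is to realise \(\beta A/\cl_{\beta A}\iota_1(I)\) as the image of the Stone--\v{C}ech extension of the quotient map \(q:A\to A/I\).

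\emph{Setup.} Put \(J=\cl_{\beta A}\iota_1(I)\). By Proposition \ref{prop:closure-ideal}, \(J\) is a closed ideal of \(\beta A\), which is the first assertion.

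By Proposition \ref{prop:natural-quotients}, \(A/I\) with the quotient topology is a topological \(MV\)-algebra and \(q:A\to A/I\) is an open continuous surjective homomorphism. Being a continuous image of \(A\), \(A/I\) is pseudocompact.

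We also need \(A/I\) to be Tychonoff, so that \(\beta(A/I)\) and Theorem \ref{thm:beta-compactification} make sense. For this I would use that \(I\) is closed, so the quotient is \(T_0\) and hence Hausdorff (Remark \ref{rem:UA}). Local convexity (Lemma \ref{Lemma}, Proposition \ref{Pro:XieYang2}(b)) then yields a uniform structure, which gives complete regularity. I regard this as a point to check rather than the heart of the matter.

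\emph{The comparison map.} By Corollary \ref{cor:beta-functor}, \(\beta q:\beta A\to\beta(A/I)\) is a continuous surjective \(MV\)-homomorphism. Its kernel \(K=\ker(\beta q)\) is a closed ideal of \(\beta A\). It contains \(\iota_1(I)\), since \(q(I)=0\), and therefore \(J\subseteq K\).

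Once we know \(K=J\), the induced map \(\beta A/J\to\beta(A/I)\) is a bijective homomorphism. It is continuous because the quotient carries the quotient topology; alternatively one can invoke Proposition \ref{Pro:YinXieYang} with the open quotient map \(\beta A\to\beta A/J\). Its domain \(\beta A/J\) is compact and its codomain is Hausdorff. A closed subset of the domain is compact, so its image is compact and hence closed. The map is therefore closed, thus a homeomorphism, and this gives the isomorphism \(\beta A/J\cong\beta(A/I)\).

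\emph{Main step: \(K\subseteq J\).} This is where I expect the real work. Let \(u\in\beta A\) with \(\beta q(u)=0\), and choose a net \(a_\alpha\in A\) with \(a_\alpha\to u\). Then \(q(a_\alpha)\to 0\) in \(\beta(A/I)\), and hence in the subspace \(A/I\).

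Fix a zero-neighbourhood \(W\) in \(A\). Since \(q\) is open, \(q(W)\) is a zero-neighbourhood of \(A/I\). So eventually \(q(a_\alpha)=q(w)\) for some \(w\in W\), that is, \(a_\alpha\ominus w\in I\).

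Re-index over pairs \((\alpha,W)\), with \(W\) running through a base at \(0\) that is downward closed, as supplied by Lemma \ref{Lemma}. This produces \(w_{\alpha,W}\in W\). Set
\[
        b_{\alpha,W}=a_\alpha\ominus w_{\alpha,W}\in I .
\]
By \eqref{eq:double-minus},
\[
        a_\alpha\ominus b_{\alpha,W}=a_\alpha\wedge w_{\alpha,W}\in\down W=W .
\]
Hence \(c_{\alpha,W}:=a_\alpha\wedge w_{\alpha,W}\to 0\) in \(A\), and therefore in \(\beta A\).

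Again by \eqref{eq:double-minus}, \(b_{\alpha,W}=a_\alpha\ominus(a_\alpha\wedge w_{\alpha,W})=a_\alpha\ominus c_{\alpha,W}\). By continuity of \(\ominus\) on \(\beta A\), this converges to \(u\ominus 0=u\). Since each \(b_{\alpha,W}\) lies in \(I\), we get \(u\in J\), and so \(K=J\).

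The delicate points are getting the product-directed-set bookkeeping right, so that the limits are genuinely joint, and making sure that convergence in \(\beta(A/I)\) of points of \(A/I\) transfers to convergence in \(A/I\). The latter holds because \(A/I\) carries the subspace topology of \(\beta(A/I)\).
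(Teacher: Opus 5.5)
Your proof is correct, but it obtains the key inclusion $K\subseteq J$ by a different route from the paper.

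\textbf{The paper's route.} The paper never handles individual elements of the kernel. After $J\subseteq K_q$, it defines $\iota:A/I\to\beta A/J$ from $\pi\circ\iota_1$ and extends it to $\beta\iota:\beta(A/I)\to\beta A/J$ by Theorem~\ref{thm:compact-reflection}. It then factors $\beta(\iota_2\circ q)$ through the open map $\pi$ via Proposition~\ref{Pro:YinXieYang}, which gives $\Psi:\beta A/J\to\beta(A/I)$. Checking $\Psi\circ\beta\iota=\mathrm{id}$ on the dense set $\iota_2(A/I)$, together with surjectivity of $\beta\iota$, makes $\Psi$ bijective and forces $K_q=J$. This diagram chase is purely formal; local order-stability enters only to make $A/I$ Tychonoff. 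It does, however, need to know in advance that $\beta A/J$ is a compact \emph{Hausdorff} topological $MV$-algebra, so that $\beta\iota$ exists. The paper asserts this on the strength of the separation facts it quotes for $A/I$. It also relies on the factorization result of Proposition~\ref{Pro:YinXieYang}.

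\textbf{Your route.} You approximate $u\in\ker\beta q$ directly by elements $a_\alpha\ominus w\in I$. The ingredients are openness of $q$, the downward-closed zero-base supplied by Lemma~\ref{Lemma}, and \eqref{eq:double-minus}. The identity $a\ominus(a\wedge w)=a\ominus w$ is exactly what makes the error term $a\wedge w\le w$ small. The isomorphism then reduces to the first isomorphism theorem plus the compact-to-Hausdorff argument.

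\textbf{What each buys.} Your argument is more elementary and self-contained. It avoids Proposition~\ref{Pro:YinXieYang} and the second Stone--\v{C}ech extension. Hausdorffness of $\beta A/J$ comes out as a consequence rather than being needed as an input. The price is that local order-stability of $A$ is used essentially in the main step, not only for the Tychonoff property.

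\textbf{Minor points.} The bookkeeping you were worried about is harmless. Restrict to the set of pairs $(\alpha,W)$ with $\alpha$ past the threshold $\alpha_W$ for $W$. This set is directed and cofinal in the product, and $(\alpha,W)\mapsto\alpha$ makes $(a_\alpha)$ along it a subnet of the original net. Your Tychonoff step (closed $I$, then $T_0$, then Hausdorff, then uniformizable by local convexity) is only sketched. The paper also settles this point purely by citation, so your treatment is on a par with it.
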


\begin{proof}
By Proposition \ref{prop:natural-quotients} we obtain that the quotient \(MV\)-algebra \( A/I\), endowed with the quotient topology, is a topological \(MV\)-algebra. Firstly, we shall prove that the quotient topological \(MV\)-algebra \( A/I\) is Tychonoff. From Lemma \ref{Lemma} it follows that \(A\) is locally convex. Since \(I\) is closed in \(A\), the quotient space is \(T_1\) by the fact that in a topological \(MV\)-algebra \(B\), \(\{b\}\) closed for all \(b\in B\) if \(\{0\}\) is closed \cite[Proposition 3.3]{Hoo1997}. Thus, it follows that the space \( A/I\) is regular from the fact every \(T_1\) topological \(MV\)-algebra is regular \cite[Corollary 3.10]{GanLuandY2024}. Since every locally convex
regular topological \(MV\)-algebra is Tychonoff \cite[Corollary 4.16]{LiYang2026}, it is enough to show that \(A/I\) is locally convex. This follows that \(A/I\) is an open homomorphic image of \(A\) by (b) of Proposition \ref{Pro:XieYang2} and Proposition \ref{prop:natural-quotients}.

Let \(q:A\to A/I\) be the natural quotient homomorphism and \(\iota_2:A/I\hookrightarrow\bet (A/I)\) the naturally injective.

Because \(A\) is pseudocompact and \(q\) is continuous, \(A/I\) is also pseudocompact.
Hence, \(\bet(A/I)\) is a compact topological \(MV\)-algebra by Theorem \ref{thm:beta-compactification}.
By Theorem \ref{thm:compact-reflection}, the functoriality of the Stone-\v{C}ech compactification gives a continuous \(MV\)-homomorphism \[\bet(\iota_2\circ q):\bet A\rightarrow \bet(A/I)\] extending \(\iota_2\circ q\) such that  \[\bet(\iota_2\circ q)\circ \iota_1=\iota_2\circ q.\]
\(\bet(\iota_2\circ q)\) is surjective because the density of \(\iota_2(A/I)\) in \(\bet(A/I)\) and the fact that \(\bet(\iota_2\circ q)(\bet A)\) is compact and contains \(\iota_2(A/I)\).

Put \(K_q = (\bet(\iota_2\circ q))^{-1}(\iota_2(0/I))\). Then \(K_q\) is a closed ideal of \(\bet A\), because \(\iota_2(0/I)\) is closed in  \(\bet (A/I)\).

\(\cl_{\bet A} \iota_1(I)\) is an ideal in \(\bet A\) by Proposition \ref{prop:closure-ideal}. We first shall prove  \(\cl_{\bet A} \iota_1(I) \subseteq K_q\).
Since \(q(I)=\{0/I\}\) and \(\bet(\iota_2\circ q)\circ \iota_1=\iota_2\circ q\), we have \[\bet(\iota_2\circ q)(\iota_1(I))=\iota_2(q(I))=\{\iota_2(0/I)\}.\] The continuity of \(\bet(\iota_2\circ q)\) implies \[\bet(\iota_2\circ q)(\cl_{\bet A} \iota_1(I)) \subseteq \cl_{\bet(A/I)}\{\bet(\iota_2\circ q)(\iota_1(I))\} = \{\iota_2(0/I)\};\] therefore \(\cl_{\bet A} \iota_1(I) \subseteq K_q\).

Next, we shall prove the reverse inclusion \(K_q \subseteq \cl_{\bet A} \iota_1(I)\).
Consider the quotient topological \(MV\)-algebra \(\bet A / \cl_{\bet A} \iota_1(I)\). Let
\[\pi:\bet A\rightarrow \bet A / \cl_{\bet A} \iota_1(I)\]
be the natural quotient homomorphism. Because \(\cl_{\bet A} \iota_1(I)\) is a closed ideal, the quotient \(\bet A /\cl_{\bet A} \iota_1(I)\) is a compact Hausdorff topological \(MV\)-algebra. The composition \(\pi\circ \iota_1:\)
\[
A  {\overset{\iota_1}\hookrightarrow }\bet A  {\overset{\pi}\rightarrow} \bet A / \cl_{\bet A} \iota_1(I)
\]
is a continuous \(MV\)-homomorphism whose kernel is \[\iota_1^{-1}(\iota_1(A) \cap \cl_{\bet A}\iota_1(I)) = \cl_A I.\] But \(I\) is closed in \(A\) (since \(q\) is continuous and \(\{0/I\}\) is closed in \(A/I\)); hence \(\cl_A I = I\). Consequently we obtain a continuous injective \(MV\)-homomorphism
\[
\iota : A/I \longrightarrow \bet A / \cl_{\bet A} I
\]
such that \(\iota\circ q=\pi\circ \iota_1.\)

By Theorem \ref{thm:compact-reflection}, there exists a unique continuous \(MV\)-homomorphism
\[
\bet \iota : \bet(A/I) \longrightarrow \bet A / \cl_{\bet A} I
\]
such that \(\bet \iota\circ \iota_2  = \iota\). On the other hand, Note that \(\bet (\iota_2\circ q)\) and \(\pi\) are quotient homomorphisms, so from Proposition \ref{prop:natural-quotients} it follows that \(\bet (\iota_2\circ q)\) and \(\pi\) are open continuous homomorphism. Since \(\cl_{\bet A} \iota_1(I)\) and \(K_q\) are the kernels of \(\pi\) and \(\bet (\iota_2\circ q)\), respectively, and \(\cl_{\bet A} \iota_1(I) \subseteq K_q\), by Proposition \ref{Pro:YinXieYang}, there is a continuous homomorphism:
\[
\Psi : \bet A / \cl_{\bet A} \iota_1(I) \longrightarrow \bet(A/I)
\]
such that \(\Psi\circ \pi=\bet (\iota_2\circ q)\). Now it follows that \(\Psi\circ \iota=\iota_2\) from the fact that, for each \(a/I\in A/I\), the following holds:
\begin{align*}
\iota_2(a/I) &= \iota_2(q(a)) \quad\quad \quad\quad \quad\quad\text{by~} q(a)=a/I
\\&= \bet (\iota_2\circ q)(\iota_1(a))\quad \quad\quad\text{by~} \bet (\iota_2\circ q)\circ\iota_1=\iota_2\circ q
\\&=\Psi(\pi(\iota_1(a))) \quad \quad\quad \quad\text{by~} \Psi\circ \pi=\bet (\iota_2\circ q)
\\&=\Psi(\iota(q(a))) \quad\quad\quad \quad\quad \quad\text{by~} \pi\circ \iota_1=\iota\circ q
\\&=\Psi(\iota(a/I)) \quad \quad\quad \quad\quad\text{by~} q(a)=a/I
\end{align*}


Then
\[\Psi\circ \bet\iota :\bet(A/I)\rightarrow \bet(A/I)\]
 is a continuous \(MV\)-homomorphism. Now restrict \(\Psi\circ \bet\iota\) to the dense subalgebra \(\iota_2(A/I)\) of \(\bet(A/I)\). For any \(a/I \in A/I\) we have
\begin{align*}
(\Psi\circ \bet\iota)(\iota_2(a/I))) &= \Psi( \bet\iota(\iota_2(a/I))
\\&=\Psi(\iota(a/I)) \quad \quad\quad\text{by~} \bet\iota\circ \iota_2=\iota
\\&=\iota_2(a/I) \quad \quad\quad \quad\text{by~} \Psi\circ \iota=\iota_2.
\end{align*}

This  implies that restricting \(\Psi \circ \beta\iota\) on \(\iota_2(A/I)\) is identity. By continuity and density of  \(\iota_2(A/I)\) in \(\beta(A/I)\) we obtain \(\Psi \circ \beta\iota\) is the identity mapping on \(\beta(A/I)\), which implies that \(\Psi\) is a continuous bijection on \( \bet A / \cl_{\bet A},  \iota_1(I)\), because \(\beta\iota\) is surjective.
Hence \(\Psi\) is an isomorphism of compact topological \(MV\)-algebras.  In particular, its kernel is trivial:
\[
\ker \Psi = K_q / \cl_{\bet A}\iota_1(I) = 0.
\]
Therefore \(K_q = \cl_{\bet A} \iota_1(I)\).

Since \(K_q = \cl_{\bet A} \iota_1(I)\), we have \(\bet A / K_q = \bet A / \cl_{\bet A} \iota_1(I)\). The map \(\Psi : \bet A/\cl_{\bet A} \iota_1(I) \to \bet(A/I)\) is already known to be a continuous bijection between compact Hausdorff spaces, hence a homeomorphism and an isomorphism of topological \(MV\)-algebras.  Thus
\[
\bet A / \cl_{\bet A} \iota_1(I) \cong \bet(A/I)
\]
holds unconditionally.
\end{proof}

\begin{corollary}\label{cor:compact-ideal-exactness}
Let \(I\) be a compact ideal of a pseudocompact topological \(MV\)-algebra \(A\). Then
\(\bet A /I \cong \bet(A/I)\) holds.
\end{corollary}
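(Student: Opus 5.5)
The plan is to deduce the corollary from Theorem \ref{thm:quotient-exactness}. The only extra work is to show that, when \(I\) is compact, it is automatically closed in \(A\) and its closure in \(\beta A\) is just \(\iota_1(I)\).

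First, \(I\) is closed in \(A\). Indeed, \(I\) is compact and \(A\) is Tychonoff, hence Hausdorff, so \(I\) is closed. Thus Theorem \ref{thm:quotient-exactness} applies and gives
\[
\beta A/\cl_{\beta A}\iota_1(I)\cong\beta(A/I).
\]

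Second, \(\iota_1(I)\) is closed in \(\beta A\). The set \(\iota_1(I)\) is the continuous image of a compact set, so it is compact. Since \(\beta A\) is Hausdorff, \(\iota_1(I)\) is closed. Hence
\[
\cl_{\beta A}\iota_1(I)=\iota_1(I).
\]
By Proposition \ref{prop:closure-ideal}, \(\iota_1(I)\) is a closed ideal of \(\beta A\). Identifying \(I\) with \(\iota_1(I)\) via the embedding \(e_A=\iota_1\) of Theorem \ref{thm:beta-compactification}, we read \(\beta A/I\) as \(\beta A/\iota_1(I)\). Substituting into the isomorphism above gives \(\beta A/I\cong\beta(A/I)\).

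There is no substantial obstacle; the only point needing care is notational. The statement writes \(\beta A/I\), which must be interpreted through the dense \(MV\)-embedding \(\iota_1\). One should also check that the isomorphism is one of topological \(MV\)-algebras. This is inherited directly from Theorem \ref{thm:quotient-exactness}, since the quotient by \(\cl_{\beta A}\iota_1(I)\) and the quotient by \(\iota_1(I)\) are the same quotient, with the same quotient topology by Proposition \ref{prop:natural-quotients}.
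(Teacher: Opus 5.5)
Your proposal is correct and follows the paper's proof. Compactness together with the Hausdorff property makes \(I\) closed in \(A\) and \(\iota_1(I)\) closed in \(\beta A\), so \(\cl_{\beta A}\iota_1(I)=\iota_1(I)\), and Theorem \ref{thm:quotient-exactness} then gives \(\beta A/I\cong\beta(A/I)\); you also make explicit that \(\beta A/I\) means \(\beta A/\iota_1(I)\), which the paper leaves implicit.
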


\begin{proof}
Since every compact Hausdorff subset is closed, \(I\) is closed in \(A\) and \(\cl_{\bet A} \iota_1(I)=I\). Hence, the fact \(\bet A /I \cong \bet(A/I)\) follows from Theorem \ref{thm:quotient-exactness}.
\end{proof}

The compact topological \(MV\)-algebras that occur as Stone--\v Cech compactifications can be described exactly in terms of dense \(C^*\)-embedded subalgebras.

\begin{definition}
Let \(X\) be a dense subspace of a compact Hausdorff space \(K\).  We say that \(X\) is \(C^*\)-embedded in \(K\) if every bounded continuous real-valued function on \(X\) extends continuously to \(K\).
\end{definition}

\begin{theorem}\label{thm:compact-representation-final}
Let \(B\) be a compact Hausdorff topological \(MV\)-algebra and let \(A\subseteq B\) be a dense \(MV\)-subalgebra with the subspace topology.  Then the following are equivalent:
\begin{enumerate}[label=\textup{(\arabic*)}]
\item[(a)] The canonical continuous extension \( p:\beta A\longrightarrow B\)
of the inclusion \(A\hookrightarrow B\) is a homeomorphism;
\item[(b)] \(A\) is \(C^*\)-embedded in \(B\);
\item[(c)] \(B\) is a Stone--\v Cech compactification of \(A\).
\end{enumerate}
When these conditions hold, \(p\) is a topological \(MV\)-isomorphism.  Conversely, every compact Hausdorff topological \(MV\)-algebra arises in this way, trivially by taking \(A=B\).
\end{theorem}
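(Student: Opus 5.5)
The plan is to prove the cycle (a)$\Rightarrow$(c)$\Rightarrow$(b)$\Rightarrow$(a), and then check separately that \(p\) respects the \(MV\)-operations. Throughout, \(A\) is Tychonoff because it is a subspace of the compact Hausdorff space \(B\). So \(\beta A\) exists, and since \(B\) is compact Hausdorff, the inclusion \(j:A\hookrightarrow B\) extends uniquely to a continuous map \(p:\beta A\to B\) with \(p\circ e_A=j\). The map \(p\) is onto: its image is compact, hence closed, and it contains the dense set \(A\).

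\textbf{(a)$\Rightarrow$(c).} If \(p\) is a homeomorphism, then \((B,j)\) is equivalent, as a compactification of \(A\), to \((\beta A,e_A)\), via a homeomorphism fixing \(A\) pointwise. This is exactly what it means for \(B\) to be a Stone--\v{C}ech compactification of \(A\).

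\textbf{(c)$\Rightarrow$(b).} Take a bounded continuous \(f:A\to\mathbb R\), say with values in a compact interval \([r,s]\). By the universal property of \(B\cong\beta A\), \(f\) extends continuously to \(B\). So \(A\) is \(C^*\)-embedded in \(B\).

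\textbf{(b)$\Rightarrow$(a).} This is the main step; I would use the standard characterization of \(\beta A\). Let \(g:A\to K\) be continuous with \(K\) compact Hausdorff, and embed \(K\subseteq[0,1]^{S}\). Each coordinate \(\pi_s\circ g\) is bounded, so by (b) it extends continuously to \(B\). These extensions assemble into a continuous \(\tilde g:B\to[0,1]^S\). Because \(A\) is dense in \(B\) and \(K\) is closed, \(\tilde g(B)\subseteq\cl g(A)\subseteq K\). Hence \(B\) has the extension property of \(\beta A\), which yields a continuous \(q:B\to\beta A\) with \(q\circ j=e_A\). Now \(q\circ p\) and \(p\circ q\) are continuous and restrict to the identity on the dense copies of \(A\); since \(\beta A\) and \(B\) are Hausdorff, both are identities. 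Thus \(p\) is a homeomorphism. The delicate point here is only that \(A\) is Tychonoff and that density forces the extension to land in \(K\); both are handled above.

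\textbf{\(p\) is an \(MV\)-isomorphism.} Theorem \ref{thm:beta-compactification} was stated for pseudocompact \(A\), and that is where the \(MV\)-structure on \(\beta A\) came from. Here I would avoid needing pseudocompactness: when (a) holds, transport the \(MV\)-structure of \(B\) to \(\beta A\) along \(p\). This transported structure agrees with the operations of \(A\) on \(e_A(A)\). If \(\beta A\) already carries the structure from Theorem \ref{thm:beta-compactification}, the two structures have continuous operations agreeing on the dense subsets \(A\times A\) and \(A\), hence they coincide because \(\beta A\) is Hausdorff. Equivalently, the maps \((u,v)\mapsto p(u\,\widehat{\op}\,v)\) and \((u,v)\mapsto p(u)\op p(v)\) are continuous and agree on \(A\times A\). (Note that \(\beta A\times\beta A=\beta(A\times A)\) is not needed for this; we only need \(A\times A\) dense in \(\beta A\times\beta A\).) Involution and \(0\) are handled in the same way. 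So \(p\) is a topological \(MV\)-isomorphism.

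The final sentence of the theorem is immediate: for \(A=B\), \(A\) is trivially \(C^*\)-embedded in itself (indeed, \(\beta B=B\) for compact \(B\)), so condition (b) holds.
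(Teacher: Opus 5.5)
Your proposal is correct and follows the paper's route. The equivalence of (a)–(c) is the standard characterization of $\beta A$ via $C^*$-embedded dense subspaces, which you prove out rather than cite, and the homomorphism property comes from comparing two continuous maps that agree on the dense set $A\times A$.

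Your extra care about where the $MV$-structure on $\beta A$ comes from is a genuine improvement. Theorem \ref{thm:beta-compactification} requires pseudocompactness, which is not assumed here, so you transport the structure along $p$. The paper's proof uses $u\oplus v$ in $\beta A$ without addressing this.
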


\begin{proof}
The equivalence between \(C^*\)-embeddedness of a dense subspace \(A\subseteq B\) and the statement that \(B\) is the Stone--\v Cech compactification of \(A\) is the standard characterization of \(\beta A\).  Thus (a), (b), and (c) are equivalent as statements about compactifications.

Assume these conditions hold.  The map \(p:\beta A\to B\) is the unique continuous extension of the inclusion \(A\hookrightarrow B\).  It remains only to check that it respects the \(MV\)-operations.  Consider the two continuous maps
\[
        \beta A\times\beta A\longrightarrow B
\]
given by
\[
        (u,v)\longmapsto p(u\oplus v)
        \quad\text{and}\quad
        (u,v)\longmapsto p(u)\oplus p(v).
\]
They agree on the dense subset \(A\times A\), because \(A\subseteq B\) is an \(MV\)-subalgebra.  Since \(B\) is Hausdorff, they agree everywhere.  Hence \(p\) preserves \(\oplus\).  The same argument applied to the two maps \(u\mapsto p(u^*)\) and \(u\mapsto p(u)^*\) shows that \(p\) preserves the involution.  Thus \(p\) is a topological \(MV\)-isomorphism.

The converse is immediate: if \(B\) is compact Hausdorff, then \(\beta B=B\), so taking \(A=B\) gives the stated representation.
\end{proof}

\begin{corollary}
Let \(B\) be a compact Hausdorff topological \(MV\)-algebra.  A dense \(MV\)-subalgebra \(A\subseteq B\) represents \(B\) as \(\beta A\) if and only if \(A\) is \(C^*\)-embedded in \(B\).
\end{corollary}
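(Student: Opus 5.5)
The corollary follows by combining Theorem \ref{thm:compact-representation-final} with Theorem \ref{thm:compact-reflection}. I would prove both directions through condition (b) of that theorem, so that everything reduces to the purely topological characterization of $\beta A$.

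\emph{The phrase ``represents $B$ as $\beta A$''.} I read it as: there is a topological $MV$-isomorphism $\beta A\to B$ extending the inclusion $A\hookrightarrow B$. Equivalently, the canonical extension $p:\beta A\to B$ of the inclusion is a homeomorphism. This is condition (a) of Theorem \ref{thm:compact-representation-final}.

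\emph{Forward direction.} Suppose $p$ is a homeomorphism. By (a)$\Rightarrow$(b) of the theorem, $A$ is $C^*$-embedded in $B$. Directly: a bounded continuous $f:A\to\mathbb R$ extends to $\beta A$, and composing with $p^{-1}$ gives the required extension to $B$.

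\emph{Reverse direction.} Suppose $A$ is $C^*$-embedded in $B$. By (b)$\Rightarrow$(a), the canonical $p$ is a homeomorphism. By the final clause of Theorem \ref{thm:compact-representation-final}, $p$ is in fact a topological $MV$-isomorphism. So $B$ is $\beta A$ as a topological $MV$-algebra.

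\emph{Well-definedness of ``$\beta A$ as an $MV$-algebra''.} Some care is needed here, since Theorem \ref{thm:beta-compactification} gives $\beta A$ an $MV$-structure only when $A$ is pseudocompact. In the reverse direction, $A$ is dense and $C^*$-embedded in the compact space $B$, so every bounded continuous function on $A$ is the restriction of one on $B$. However, an arbitrary continuous real function on $A$ need not be bounded, so pseudocompactness does not follow from this alone. I would therefore avoid appealing to Theorem \ref{thm:beta-compactification}. Instead, I transport the $MV$-structure of $B$ to $\beta A$ along the homeomorphism $p$. This structure is determined uniquely by density of $A$ and Hausdorffness, by the same argument used in Theorem \ref{thm:compact-representation-final}.

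\emph{Main obstacle.} The only real difficulty is the interpretive one just described: making sure ``represents $B$ as $\beta A$'' is meaningful without assuming $A$ pseudocompact. Once this is settled by the transport argument, both directions are immediate from Theorem \ref{thm:compact-representation-final}.
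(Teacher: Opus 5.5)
Your proposal is correct and matches the paper, which gives no separate proof and treats the corollary as a direct restatement of the equivalence (a)$\Leftrightarrow$(b) in Theorem \ref{thm:compact-representation-final} together with its final clause. Your transport-of-structure remark is a sensible refinement: you define the $MV$-operations on $\beta A$ via the homeomorphism $p$ rather than via Theorem \ref{thm:beta-compactification}. The paper's proof of that theorem tacitly uses an $MV$-structure on $\beta A$ that it only constructs for pseudocompact $A$. Your opening appeal to Theorem \ref{thm:compact-reflection} is never actually used, and it would itself require pseudocompactness.
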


\section{Concluding remarks}

Topological \(MV\)-algebras do not have invertible translations, so the topological group proof is unavailable.  Nevertheless they have a continuous Mal'tsev operation, and the Reznichenko--Uspenskij theorem shows that this is exactly the structure needed for the pseudocompact product theorem.  Thus the correct conceptual diagram is
\[
        \text{topological groups}
        \subseteq
        \text{Mal'tsev spaces}
        \supseteq
        \text{topological }MV\text{-algebras}.
\]

We have proved that arbitrary products of pseudocompact topological \(MV\)-algebras are pseudocompact.  The proof does not use a Ra\v{\i}kov completion or a group-like uniformity.  Instead, it relies on the fact that every topological \(MV\)-algebra is a Mal'tsev space and on the theorem of Reznichenko and Uspenskij extending the Comfort--Ross theorem from topological groups to Mal'tsev spaces. The Stone-\v{C}ech compactification theorem obtained here suggests further questions.  For instance, one may ask which algebraic or topological properties of \(A\) are reflected in the compact topological \(MV\)-algebra \(\beta A\), how ideals of \(A\) behave under the compactification, and whether spectral or maximal-ideal constructions commute with this compactification under additional hypotheses.

\end{document}